 \newtheorem{theorem}{Theorem}[section]
 \newtheorem{corollary}[theorem]{Corollary}
 \newtheorem{lemma}[theorem]{Lemma} 
 \newtheorem{proposition}[theorem]{Proposition}
 \theoremstyle{definition}
 \theoremstyle{remark}
 \newtheorem*{remark}{Remark}
  \numberwithin{equation}{section}
\renewcommand{\theta}{\vartheta}
\DeclareMathOperator{\tform}{\mathfrak{t}}
\DeclareMathOperator{\wform}{\mathfrak{w}}
\DeclareMathOperator{\ran}{ran}
\DeclarePairedDelimiterX\sipt[2]{\langle}{\rangle^{}_{T}}{#1\,\delimsize\vert\,#2}
\DeclarePairedDelimiterX\sips[2]{\langle}{\rangle^{}_{S}}{#1\,\delimsize\vert\,#2}
\DeclarePairedDelimiterX\sipv[2]{(}{)_{v}}{#1\,\delimsize\vert\,#2}
\DeclarePairedDelimiterX\sipw[2]{(}{)_{w}}{#1\,\delimsize\vert\,#2}
\newcommand{\dupN}{\mathbb{N}}
\newcommand{\seq}[1]{(#1_{n})_{n\in\dupN}}
\newcommand{\dupR}{\mathbb{R}}
\newcommand{\dupC}{\mathbb{C}}
\newcommand{\dupK}{\mathbb{K}}
\newcommand{\dom}{\operatorname{dom}}
\newcommand{\dt}{\mathcal{D}_*(T)}
\newcommand{\dtt}{\mathcal{D}_*(T^*T)}
\newcommand{\D}{\mathcal{D}}
\newcommand{\hil}{\mathcal{H}}
\newcommand{\ehil}{\mathcal{E}}
\newcommand{\kil}{\mathcal{K}}
\newcommand{\bh}{\mathcal{B}(\hil)}
\DeclarePairedDelimiterX\abs[1]{\lvert}{\rvert}{#1}
\DeclarePairedDelimiterX\sip[2]{\langle}{\rangle}{#1,#2}
\DeclarePairedDelimiterX\siptilde[2]{(}{)_{\!_{\widetilde{A}}}}{#1\,\delimsize\vert\,#2}
\DeclarePairedDelimiterX\sipf[2]{(}{)_{f}}{#1\,\delimsize\vert\,#2}
\DeclarePairedDelimiterX\sipg[2]{(}{)_{g}}{#1\,\delimsize\vert\,#2}
\DeclarePairedDelimiterX\siptw[2]{(}{)_{\tform+\wform}}{#1\,\delimsize\vert\,#2}
\DeclarePairedDelimiterX\set[2]{\{}{\}}{#1\,:\,#2}
\DeclarePairedDelimiterX\dual[2]{\langle}{\rangle}{#1,#2}
\DeclarePairedDelimiterX\sipa[2]{(}{)_{\!_A}}{#1\,\delimsize\vert\,#2}
\DeclarePairedDelimiterX\sipe[2]{\langle}{\rangle_{\!_{\mathcal E}}}{#1\,\delimsize\vert\,#2}
\DeclarePairedDelimiterX\sipc[2]{(}{)_{\!_C}}{#1\,\delimsize\vert\,#2}
\DeclarePairedDelimiterX\sipab[2]{(}{)_{\!_{A+B}}}{#1\,\delimsize\vert\,#2}
\DeclarePairedDelimiterX\sipb[2]{(}{)_{\!_B}}{#1\,\delimsize\vert\,#2}
\newcommand{\limn}{\lim\limits_{n\rightarrow\infty}}
\newcommand{\opmatrix}[4]{\begin{bmatrix} #1 &  #2 \\   #3& #4\end{bmatrix}}
\begin{document}
\title[Extensions of positive operators]{Extensions of positive symmetric operators and Krein's uniqueness criteria}

\author[Z. Sebesty\'en]{Zolt\'an Sebesty\'en}
\author[Zs. Tarcsay]{Zsigmond Tarcsay}
\thanks{The author was supported by   the J\'anos Bolyai Research Scholarship of the Hungarian Academy of Sciences, and by the \'UNKP--21-5-ELTE-1090 New National Excellence Program of the Ministry for Innovation and Technology. Project no. TKP2021-NVA-09 has been implemented with the support
provided by the Ministry of Innovation and Technology of Hungary from
the National Research, Development and Innovation Fund, financed under
the TKP2021-NVA funding scheme. ``Application Domain Specific Highly Reliable IT Solutions'' project  has been implemented with the support provided from the National Research, Development and Innovation Fund of Hungary, financed under the Thematic Excellence Programme TKP2020-NKA-06 (National Challenges Subprogramme) funding scheme.}
\address{%
Zs. Tarcsay \\ Department of Applied Analysis  and Computational Mathematics\\ E\"otv\"os Lor\'and University\\ P\'azm\'any P\'eter s\'et\'any 1/c.\\ Budapest H-1117\\ Hungary }
\email{zsigmond.tarcsay@ttk.elte.hu}
\address{%
Z. Sebesty\'en \\ Department of Applied Analysis  and Computational Mathematics\\ E\"otv\"os Lor\'and University\\ P\'azm\'any P\'eter s\'et\'any 1/c.\\ Budapest H-1117\\ Hungary }
\email{sebesty@cs.elte.hu}

\subjclass[2010]{Primary: 47A57; 47A20  Secondary: 47B25}

\keywords{Shorted operator,  selfadjoint contractive extension, nonnegative selfadjoint extension, Friedrichs and Krein-von Neumann extension}

\dedicatory{Dedicated to the memory of John von Neumann (1903-1957)}

\begin{abstract}
We revise Krein's extension theory of positive symmetric operators. Our approach using factorization through an auxiliary Hilbert space has several advantages: it can be applied to non-densely defined transformations and it works in both real and complex spaces. As an application of the results and the construction we consider positive self-adjoint extensions of the modulus square operator $T^*T$ of a densely defined linear transformation $T$ and  bounded self-adjoint extensions of a symmetric operator. Krein's results on the uniqueness of positive (respectively, norm preserving) self-adjoint extensions are also revised. 
\end{abstract}

\maketitle

\section{Introduction}
The complete description of positive self-adjoint extensions of a densely defined positive operator $T$ (acting in a complex Hilbert space $\hil$) was done in the seminal work \cites{Krein12} of M. G. Krein. Krein not only showed that such an operator always has a positive self-adjoint extension, but that these extensions form an (unbounded) operator interval $[T_N,T_F]$ with respect to the form order, also introduced by Krein. The smallest element $T_N$ of that interval is called the Krein-von Neumann extension, while the largest one $T_F$ is known as the Friderichs extension. In this paper, our primary goal is to revise Krein's classical results (including his uniqueness criteria) and extend the extension theory to operators that are not necessarily densely defined. It is also a novelty that our procedure does not use the spectral theory of symmetric operators, so all our results remain valid in \textit{real} Hilbert spaces. 

The factorization procedure we use goes back to the article by the first author and J. Stochel \cites{SebStoch91}. Taking advatage of that treatment we revise Krein's uniqueness criterion \cite{Krein12} (cf. also \cite{HassiSandoviciDeSnoo}*{Theorem 4.7} for the case of positive linear relations). An additional benefit of generalizing Krein's results to non-densely defined operators will be apparent when considering the positive self-adjoint solvability of the operator equation $XA=B$ (see Corollary \ref{C:opequation} below). Another immediate application is the interesting fact that the `modulus square' operator $T^*T$ of an arbitrary densely defined operator $T$ always has a positive self-adjoint extension. (Surprisingly, this is not the case with $TT^*$: it can be even non-closable, see \cite{SebTarcsayWasaa}).

In the second half of our article, we deal with bounded extensions. First, in Theorem \ref{T:Neumannext} we provide a refinement of \cite{Sebestyen83a}*{Theorem 1}  characterizing positive operators having bounded positive extensions.  With the help of that result, a simple proof can be given for the existence of the transformation called 'shorted operator' by Krein \cite{Krein12} (see also \cites{Anderson1, Anderson2, PekarevSmulian}). On the other hand, Theorem \ref{T:Neumannext} also enables us to investigate the norm preserving  self-adjoint extensions of bounded symmetric operators  and  to revise Krein's uniqueness condition. The main ingredient in our proof is a formula describing  the range space of the square root of the shorted operator (Theorem \ref{T:ranS-TN}).

The following notations will be used throughout the paper. Let $\hil$ and $\kil$ be a real or complex Hilbert space and let $T:\hil\to\kil$ be a linear operator. In this last statement, we mean that the domain of $T$ (in notation: $\dom T$) is a linear subspace of $\hil$, while the range space (in notation: $\ran T$) is a (linear) subspace of $\kil$. If $\hil=\kil$ and $T$ satisfies 
\begin{equation*}
    \sip{Tf}{g}=\sip{f}{Tg},\qquad f,g\in\dom T,
\end{equation*}
then  $T$ is called  \textit{symmetric}. If in addition the quadratic form of $T$ is non-negative, that is, 
\begin{equation*}
    \sip{Tf}{f}\geq 0, \qquad f\in\dom T,
\end{equation*}
then $T$ is called \textit{positive}. If the underlying Hilbert space $\hil$ is complex then every operator $T$ having real quadratic form is automatically symmetric. 
Nevertheless, in this note we do not restrict ourselves to complex spaces, so  when speaking about positive operators, we always assume the symmetry of the transformation in question.

We recall the adjoint of a densely defined operator $T$ which is defined on its domain
\begin{equation*}
    \dom T^*\coloneqq \set{k\in\kil}{(\forall f\in\dom T)\,:\, \sip{Tf}{k}=\sip{f}{k^*}\quad \mbox{for some $k^*\in\hil$}}
\end{equation*}
by letting
\begin{equation*}
    T^*k\coloneqq k^*.
\end{equation*}
As it is well known, a densely defined  operator $T:\hil\to\hil$  is symmetric if and only if $T\subset T^*$, i.e., $T^*$ extends $T$. We call $T$ \textit{self-adjoint} if $T$ is densely defined and $T^*=T$. Recall also that the linear operator $T:\hil\to\kil$ is closed if its graph
\begin{equation*}
    G(T)\coloneqq \set{(f,Tf)}{f\in\dom T}
\end{equation*}
is a closed linear subspace of the product Hilbert space $\hil\times\kil$. $T$ is closable if it has a closed extension. The minimal closed extension of a closable operator $T$ is denoted by $\bar T$ and its graph is given by
\begin{equation*}
    G(\bar T)=\overline{G(T)}.
\end{equation*} 
The adjoint of a densely defined operator is always closed. In particular, every self-adjoint operator is closed. 
It is also well known that the closure of a densely defined operator $T$ is just its second adjoint $T^{**}$.

If $T$ is closed then a  linear subspace $\D$ of $\dom T$ is called a \textit{core for} $T$ if 
\begin{equation*}
    \overline{G(T|_{\D})}=G(T).
\end{equation*}
Throughout the paper we shall frequently use a fundamental theorem due of J. von Neumann \cite{Neumann} which says that $T^*T$ and $TT^*$ are both positive and self-adjoint operators provided that $T:\hil\to\kil$ is
densely defined and closed. In that case, $\D\coloneqq \dom T^*T$ is a core for $T$. The unique  square root of a positive self-adjoint operator $A$ will be denoted by $A^{1/2}$. If $T$ is densely defined and closed, then the domain of $(T^*T)^{1/2}$ is identical with $\dom T$. We mention here that the existence of the square root can be easily verified using the spectral theorem. An elementary treatment that also applies for real Hilbert spaces can be found in \cite{SebTarcs2016}.
 


 
\section{Positive self-adjoint extensions of positive symmetric operators}\label{S:section2}
 
Let $\hil$ be a real or complex Hilbert space. In what follows we fix a positive and symmetric operator $T:\hil\to\hil$ whose domain $\dom T$ is a linear subspace of $\hil$. We do not assume $\dom T$ to be dense or closed. Our first result below provides  various sufficient and necessary conditions on $T$ under which it extends to a positive and self-adjoint operator $\widetilde T$. The cornerstone of those characterizations is the linear subspace 
\begin{equation}\label{E:D(T)}
    \D_*(T)\coloneqq \set{g\in\hil}{\sup\set{\abs{\sip{Th}{g}}^2}{h\in\dom T, \sip{Th}{h}\leq 1}<+\infty}.
\end{equation}
Clearly, $g\in\D_*(T)$ holds if and only if there is a constant $m_g\geq 0$ (depending only on $g$) such that 
\begin{equation*}
    \abs{\sip{Th}{g}}^2\leq m_g\sip{Th}{h},\qquad (\forall h\in\dom T).
\end{equation*}
From the Cauchy-Schwarz inequality applied to the form $(h,k)\mapsto \sip{Th}{k}$ it follows that 
\begin{equation}\label{E:domTreszeDT}
    \dom T\subseteq \D_*(T).
\end{equation}

\begin{theorem}\label{T:Theorem1}
Let $T:\hil\to\hil$ be a positive symmetric operator. Then the following statements are equivalent:
\begin{enumerate}[label=\textup{(\roman*)}]
    \item $T$ has a positive self-adjoint extension,
    \item $\dt\subseteq \hil$ is dense,
    \item $\dt^\perp\subseteq \ran(I+T)$,
    \item $\dt^\perp\cap \overline{\ran T}=\{0\}$,
    \item For every sequence $\seq h$ of $\dom T$ such that $\sip{Th_n}{h_n}\to0$ and $Th_n\to f$ it follows that $f=0$,
    \item There exists a Hilbert space $\mathcal E$ and a densely defined linear operator $V:\hil\to \mathcal E$ with $\dom V\supset \dom T$ such that $V(\dom T)^\perp=\{0\}$ and
    \begin{equation}\label{E:VgVh}
        \sipe{Vg}{Vh}=\sip{g}{Th},\qquad  g\in\dom V,h\in\dom T.
    \end{equation}
\end{enumerate}
\end{theorem}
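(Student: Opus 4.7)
The proof centers on the equivalence $(i)\Leftrightarrow(vi)$, with the remaining conditions flowing from it via a short cycle. The main tool is the auxiliary Hilbert space $\hil_T$: the completion of $\dom T$ modulo $N=\{h\in\dom T:\sip{Th}{h}=0\}$ with respect to the inner product induced by the positive form $(h,k)\mapsto\sip{Th}{k}$; let $j:\dom T\to\hil_T$ denote the canonical map, so that $(jh\,|\,jk)_{\hil_T}=\sip{Th}{k}$. A key observation is that $g\in\D_*(T)$ is equivalent to the boundedness of the functional $jh\mapsto\sip{g}{Th}$ in the $\hil_T$-norm on $j(\dom T)$.

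\textbf{Main circle $(i)\Rightarrow(ii)\Rightarrow(vi)\Rightarrow(i)$.} For $(vi)\Rightarrow(i)$, show $V$ is closable using $V(\dom T)^\perp=\{0\}$: if $g_n\to 0$ in $\hil$ and $Vg_n\to\eta$ in $\mathcal E$, then $\sipe{\eta}{Vh}=\lim\sipe{Vg_n}{Vh}=\lim\sip{g_n}{Th}=0$ for $h\in\dom T$ forces $\eta=0$. By von Neumann's theorem, $\bar V^*\bar V$ is positive and self-adjoint; it extends $T$ because \eqref{E:VgVh} extended by continuity to $\dom\bar V$ gives $Vh\in\dom\bar V^*$ with $\bar V^*(Vh)=Th$ for every $h\in\dom T$. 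For $(i)\Rightarrow(ii)$, if $\widetilde T$ is a positive self-adjoint extension, then Cauchy--Schwarz applied to its square root yields $|\sip{Th}{g}|^2\le\|\widetilde T^{1/2}g\|^2\sip{Th}{h}$, so $\dom\widetilde T^{1/2}\subseteq\D_*(T)$ is dense. For $(ii)\Rightarrow(vi)$, for each $g\in\D_*(T)$ extend the bounded functional $jh\mapsto\sip{g}{Th}$ to all of $\hil_T$ and take its Riesz representative $Vg\in\overline{j(\dom T)}=:\mathcal E$; the resulting linear $V:\D_*(T)\to\mathcal E$ satisfies \eqref{E:VgVh} with $V|_{\dom T}=j$.

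\textbf{Equivalences $(ii)\Leftrightarrow(iii)\Leftrightarrow(iv)$ and $(iv)\Rightarrow(v)$.} The implications $(ii)\Rightarrow(iii),(iv)$ are trivial. For $(iv)\Rightarrow(v)$: given $\sip{Th_n}{h_n}\to 0$ and $Th_n\to f$, the bound $|\sip{Th_n}{g}|^2\le m_g\sip{Th_n}{h_n}$ for $g\in\D_*(T)$ yields $\sip{f}{g}=\lim\sip{Th_n}{g}=0$, so $f\in\D_*(T)^\perp\cap\overline{\ran T}=\{0\}$ by (iv). For $(iii)\Rightarrow(ii)$: $y\in\D_*(T)^\perp$ can be written $y=g+Tg$ with $g\in\dom T\subseteq\D_*(T)$, and $\sip{g}{y}=\|g\|^2+\sip{Tg}{g}=0$ forces $g=0$. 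For $(iv)\Rightarrow(ii)$: decompose $y\in\D_*(T)^\perp$ as $y_1+y_2$ with $y_1\in\overline{\ran T}$ and $y_2\perp\overline{\ran T}$; then $y_2\in\D_*(T)$ (with constant $0$), so $\sip{y}{y_2}=\|y_2\|^2=0$ and hence $y=y_1\in\D_*(T)^\perp\cap\overline{\ran T}=\{0\}$.

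\textbf{Main obstacle: $(v)\Rightarrow(iv)$.} Given $f\in\D_*(T)^\perp\cap\overline{\ran T}$, form the closed subspace $M:=\overline{\{(Th,jh):h\in\dom T\}}\subseteq\hil\oplus\hil_T$. A pair $(y,\xi)\in M^\perp$ satisfies $\sip{y}{Th}+(\xi\,|\,jh)_{\hil_T}=0$ for all $h\in\dom T$, so Cauchy--Schwarz in $\hil_T$ yields $|\sip{Th}{y}|\le\|\xi\|\,\|jh\|_{\hil_T}$, placing $y\in\D_*(T)$. Since $f\in\D_*(T)^\perp$, this forces $\sip{f}{y}=0$, hence $(f,0)\perp M^\perp$ and $(f,0)\in M$. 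This produces $(h_n)\subset\dom T$ with $Th_n\to f$ and $\sip{Th_n}{h_n}=\|jh_n\|^2\to 0$, and $(v)$ gives $f=0$. The main subtlety is recognizing the product subspace $M$ whose annihilator is controlled exactly by the $\D_*(T)$ condition.
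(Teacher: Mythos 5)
Your proof is correct, but it is organized quite differently from the paper's, which runs all six conditions through a single cycle (i)$\Rightarrow$(ii)$\Rightarrow$(iii)$\Rightarrow$(iv)$\Rightarrow$(v)$\Rightarrow$(vi)$\Rightarrow$(i) and performs the key construction only once, inside (v)$\Rightarrow$(vi). There the auxiliary space is the completion of $\ran T$ under $\sipe{Tf}{Th}\coloneqq\sip{Tf}{h}$, and condition (v) is exactly what makes this inner product well defined (if $\sip{Th}{h}=0$ then $Th=0$); the operator $V$ is then obtained as $J_T^*$ for the natural embedding $J_T$. You instead build the auxiliary space as the completion of $\dom T$ modulo the null space of the form, which is always legitimate, and this lets you run (ii)$\Rightarrow$(vi) directly by Riesz representation, independently of (v). The price is that you must supply (v)$\Rightarrow$(iv) separately, which the paper gets for free from its cycle; your argument for it --- passing to the closed subspace $M=\overline{\set{(Th,jh)}{h\in\dom T}}$ of $\hil\oplus\hil_T$, identifying the first components of $M^\perp$ with $\D_*(T)$, and concluding $(f,0)\in M^{\perp\perp}=M$ --- is correct and is in effect a linear-relation version of the paper's closability argument for $J_T$. (As a side remark, that step never uses $f\in\overline{\ran T}$, so you in fact prove (v)$\Rightarrow$(ii) directly.) Your treatment of (vi)$\Rightarrow$(i) also makes the closability of $V$ explicit, which the paper leaves implicit (there it follows because $V(\dom T)\subseteq\dom V^*$ and $V(\dom T)$ is dense, so $V^*$ is densely defined). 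Both routes are sound: yours is slightly longer and requires one extra implication, but its auxiliary-space construction is more robust; the paper's single cycle is more economical because the one delicate construction is placed exactly where the hypothesis needed to justify it is available.
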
 
\begin{proof}
(i)$\Rightarrow$(ii): Let  $\widetilde{T}$ be any positive self-adjoint extension of $T$ then it is easy to see that 
\begin{equation*}
    \dom \widetilde{T}^{1/2}\subseteq \dt.
\end{equation*}
For if $g\in\dom \widetilde{T}^{1/2}$ then  it follows that 
\begin{equation*}
    \abs{\sip{Th}{g}}^2=\abs{\sip{\widetilde{T}^{1/2}h}{\widetilde{T}^{1/2}g}}^2\leq \|\widetilde{T}^{1/2}g\|^2\sip{Th}{h},\qquad h\in\dom T,
\end{equation*}
whence $g\in\dt$, indeed.

(ii)$\Rightarrow$(iii): This implication is obvious.

(iii)$\Rightarrow$(iv): Take any vector $g\in\dt^\perp$ then there exists $h\in\dom T$ such that $g=h+Th$ by (iv). Then $h\in\dt$ as well and thus 
\begin{equation*}
    0=\sip gh=\sip{h}{h}+\sip{Th}{h},
\end{equation*}
which yields $h=0$ due to positivity of $T$.

(iv)$\Rightarrow$(v): Let $\seq h$ and $f\in\hil$ be as in (v). Clearly, $f\in\overline{\ran T}$. On the other hand, for every $g\in\dt$ we have
\begin{equation*}
    \abs{\sip{Th_n}{g}}^2\leq m_g\sip{Th_n}{h_n},
\end{equation*}
so letting $n\to+\infty$ implies $\sip{f}{g}=0$. Hence $f\in\dt^\perp$ and therefore $f=0$ by (iv).

(v)$\Rightarrow$(vi): Consider the following inner product $\sipe\cdot\cdot$ on the range space $\ran T$ of $T$:
\begin{equation*}
    \sipe{Tf}{Th}\coloneqq \sip{Tf}{h},\qquad f,h\in\dom T.
\end{equation*}
From (v) it follows that $\sipe\cdot\cdot$ is indeed a well defined inner product: for if $\sip{Th}{h}=0$ for some $h\in\dom T$ then the sequence $h_n\coloneqq h$ clearly satisfies the conditions of (v) with $f=Th$ hence $Th=0$. Let $\mathcal E_T$ denote the `\textit{energy space}' of $T,$ that is the completion of the  prehilbert space so obtained. Let us denote by $J_T$ the natural embedding of $\ran T\subseteq \mathcal{E}_T$ into $\hil$ defined via
\begin{equation}
    J_T(Th)\coloneqq Th,\qquad h\in\dom T.
\end{equation}
Clearly, $J_T:\mathcal E_T\to\hil$ is densely defined and  condition (v) expresses just that $J_T$ is closable. Furthermore, for every $f,h\in\dom T$ one has
\begin{equation*}
    \sip{J_T(Th)}{f}=\sip{Th}{f}=\sipe{Th}{Tf},
\end{equation*}
whence we conclude that $\dom T\subseteq \dom J_T^*$ and 
\begin{equation}\label{E:JT*}
    J_T^*f=Tf\in\mathcal E_T,\qquad f\in\dom T.
\end{equation}
As a consequence we see that $J_T^*(\dom T)=\ran T\subseteq \mathcal E_T$ is dense,
\begin{equation*}
    \sipe{J_T^*g}{J_T^*h}=\sipe{J_T^*g}{Th}=\sip{g}{Th},\qquad g\in\dom J_T^*, h\in\dom T,
\end{equation*}
which means that 
$V\coloneqq J_T^*$ fulfills every  condition of statement (vi).

(vi)$\Rightarrow$(i): Consider a linear operator $V:\hil\to\mathcal E$ satisfying all the properties stated in (vi). We are going to show that $\widetilde{T}\coloneqq V^*V^{**}$ is then a positive self-adjoint extension of $T$. With \eqref{E:VgVh} in mind, it will follow immediately if we show that  $V(\dom T)\subseteq \dom V^*$. Take therefore $g\in\dom V$ and $h\in\dom T$ then \begin{equation*}
    \sipe{Vg}{Vh}=\sip{g}{Th}.
\end{equation*}
This implies $Vh\in\dom V^*$. 
\end{proof}

\begin{remark}
Let us make some comments about Theorem \ref{T:Theorem1}:
\begin{enumerate}
    \item A densely defined positive symmetric operator $T$ always has a positive self-adjoint extension. This follows immediately from inclusion \eqref{E:domTreszeDT}.
    \item Condition (v) was called positive closability by Ando and Nishio in \cite{AndoNishio}.
    \item The equivalence between (i) and (ii) was established in \cite{SebStoch91}*{Theorem 1}. Conditions (iii), (iv) and (vi) are refinements of that result.
    \item An immediate calculation shows that 
    \begin{equation}\label{E:DT=domJT}
        \D_*(T)=\dom J_T^*,
    \end{equation}
    whence one obtains
    \begin{equation*}
        \D_*(T)=\dom(J^{**}_TJ^*_T)^{1/2},
    \end{equation*}
    where $A^{1/2}$ denotes the  square root of the positive self-adjoint operator $A$.
\end{enumerate}
\end{remark}

A transformation having a positive and self-adjoint extension must apparently be  closable. For this reason, from the point of view of self-adjoint extendibility, it is not a serious restriction if we assume the operator in question to be closable. Under this additional assumption condition (iii) of Theorem \ref{T:Theorem1} can be weakened as follows:
\begin{corollary}
Assume that the positive symmetric operator  $T$ is closable. Then the following condition is still equivalent with the conditions (i)-(vi) of Theorem \ref{T:Theorem1}:
\begin{itemize}
    \item[\textup{(vii)}] $\dt^\perp\subseteq \overline{\ran (I+T)}$. 
\end{itemize}
\end{corollary}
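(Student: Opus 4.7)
The forward implication is essentially free: one of the equivalences of Theorem \ref{T:Theorem1} is condition (iii), which asserts $\D_*(T)^\perp \subseteq \ran(I+T)$, and this is obviously stronger than (vii). So the whole content of the corollary is the implication (vii) $\Rightarrow$ (i)-(vi), and closability will enter exactly here. I plan to aim for condition (ii), that is, to show $\D_*(T)^\perp = \{0\}$.

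The plan is as follows. Take $g \in \D_*(T)^\perp$. By (vii) there is a sequence $(h_n)$ in $\dom T$ such that $g_n := h_n + Th_n \to g$. The inclusion \eqref{E:domTreszeDT}, $\dom T \subseteq \D_*(T)$, means that $g$ is orthogonal to each $h_n$, so $\sip{g_n}{h_n} = \sip{g_n - g}{h_n}$. On the other hand positivity of $T$ gives
\begin{equation*}
\|h_n\|^2 \;\leq\; \|h_n\|^2 + \sip{Th_n}{h_n} \;=\; \sip{g_n}{h_n},
\end{equation*}
so Cauchy-Schwarz yields $\|h_n\|^2 \leq \|g_n - g\|\,\|h_n\|$, i.e.\ $\|h_n\| \leq \|g_n - g\| \to 0$. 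Consequently $h_n \to 0$ and therefore $Th_n = g_n - h_n \to g$.

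At this point the closability of $T$ finishes the job: the pairs $(h_n, Th_n)$ lie in $G(T) \subseteq G(\bar T)$ and converge to $(0,g)$, so $(0,g) \in G(\bar T)$ and hence $g = \bar T 0 = 0$. Thus $\D_*(T)^\perp = \{0\}$, which is (ii).

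I expect no serious obstacle. The only delicate point to watch is the use of the orthogonality $g \perp h_n$, which relies precisely on the elementary inclusion \eqref{E:domTreszeDT} and is what turns the mere approximation in $\overline{\ran(I+T)}$ into the norm estimate $\|h_n\| \leq \|g_n - g\|$. Without closability this estimate would still produce a sequence $(h_n)$ with $h_n \to 0$ and $Th_n \to g$, and one would be reduced to condition (v); closability is what allows us to bypass (v) and conclude $g = 0$ directly, which is exactly why the hypothesis of the corollary cannot be dropped.
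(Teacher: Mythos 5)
Your argument is correct, and it reaches the conclusion by a genuinely different route than the paper. The paper passes to the closure: it first proves the auxiliary identity $\D_*(T)=\D_*(\bar T)$, then observes that $I+\bar T$ is closed and bounded below, so $\ran(I+\bar T)=\overline{\ran(I+T)}$ is closed, and hence (vii) is literally condition (iii) of Theorem \ref{T:Theorem1} applied to $\bar T$; since $T$ and $\bar T$ have the same positive self-adjoint extensions, the corollary follows. You instead argue directly toward (ii): for $g\in\dt^\perp$ you approximate $g$ by $(I+T)h_n$, use the orthogonality $g\perp\dom T$ (from \eqref{E:domTreszeDT}) together with positivity to get $\|h_n\|^2\leq \sip{(I+T)h_n-g}{h_n}\leq\|g_n-g\|\,\|h_n\|$, conclude $h_n\to 0$ and $Th_n\to g$, and then let closability force $(0,g)\in G(\bar T)$, i.e.\ $g=0$. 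This is essentially the ``approximate'' version of the paper's own step (iii)$\Rightarrow$(iv) in Theorem \ref{T:Theorem1}, with closability absorbing the loss of exactness. Your proof is more self-contained and elementary: it needs neither the identity $\D_*(T)=\D_*(\bar T)$ nor the standard fact that a closed, bounded-below operator has closed range. The paper's detour, on the other hand, yields $\D_*(T)=\D_*(\bar T)$ as a by-product of independent interest and keeps the statement formally within the framework of Theorem \ref{T:Theorem1} applied to $\bar T$. Your closing remark correctly isolates where closability is indispensable: without it one only obtains a sequence with $h_n\to0$, $\sip{Th_n}{h_n}\to0$ and $Th_n\to g$, which is exactly the situation that condition (v) is designed to rule out but which (vii) alone cannot.
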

\begin{proof}
It is clear that (vii) is formally weaker than condition (iv). Hence it is enough to prove that a closable positive operator $T$ which fulfills (vii), has a positive self-adjoint extension. To do so we prove first that 
\begin{equation*}
    \D_*(T)=\D_*(\bar T), 
\end{equation*}
where $\bar T$ denotes the closure of $T$. It is clear that $\D_*(\bar T)\subseteq \D_*(T)$. For the converse, let $g\in\D_*(T)$ and $h\in\dom \bar T$, and choose a $\seq h$ from $\dom T$ such that $h_n\to h$, $Th_n\to \bar Th$. Then 
\begin{align*}
    \abs{\sip{\bar Th}{g}}^2=\limn\abs{\sip{Th_n}{g}}^2\leq \limn m_g\cdot \sip{Th_n}{h_n}=m_g \cdot \sip{\bar Th}{h},
\end{align*}
hence  $g\in \D_*(\bar T)$, as it is claimed. 

We prove now that (vii) implies (i). It is clear that $T$ has a positive and self-adjoint extension if and only if $\bar T$ does. By Theorem \ref{T:Theorem1}, this is equivalent to inclusion
\begin{equation}\label{E:D(Tclosure)}
    \D_*(\bar T)\subseteq \ran(I+\bar T).
\end{equation}
As we saw above, $\D_*(\bar T)=\D_*(T)$. On the other hand, $I+\bar T$ is a closed and bounded below  operator, thus we have
\begin{equation*}
    \ran (I+\bar T)=\overline{\ran (I+\bar T)}=\overline{\ran (I+T)}.
\end{equation*}
The equivalence between (vii) and \eqref{E:D(Tclosure)} is now obvious. 
\end{proof}
As an immediate application of Theorem \ref{T:Theorem1} the positive self-adjoint solvability of operator equations of  type
\begin{equation*}
    XA\supseteq B
\end{equation*}
is considered in the following result: 

\begin{corollary}\label{C:opequation}
Let $\hil$ and $\kil$ be real or complex Hilbert spaces and let  $A,B:\kil\to\hil$ be (not necessarily densely defined or closable) linear operators such that $\dom B\subset \dom A$. The following statements are equivalent:
\begin{enumerate}[label=\textup{(\roman*)}]
    \item there exists a positive self-adjoint operator $S$ such that 
 \begin{equation*}
     SA\supseteq B,
 \end{equation*}   
 \item $\sip{Bh}{Ah}\geq 0$ for every $h\in\dom B$ and the set of those vectors $g$ such that 
 \begin{equation*}
     \sup\set{\abs{\sip{Bh}{g}}}{h\in\dom B, \sip{Bh}{Ah}\leq 1}<+\infty
 \end{equation*}
 is dense in $\hil$,
 \item for every sequence $\seq h$ of $\dom B$ such that $\sip{Bh_n}{Ah_n}\to0$ and $Bh_n\to f$ it follows that $f=0$.
\end{enumerate}
\end{corollary}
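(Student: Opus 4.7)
The strategy is to reduce the corollary to Theorem \ref{T:Theorem1} by introducing a positive symmetric operator $T$ on $\hil$ whose self-adjoint extensions correspond to the operators $S$ solving $SA \supseteq B$. Specifically, I set $\dom T := A(\dom B)$ and $T(Ah) := Bh$ for $h \in \dom B$, so that $SA \supseteq B$ becomes precisely $S \supseteq T$ as operators on $\hil$. Under the substitution $k = Ah$, the identities $\sip{Tk}{k} = \sip{Bh}{Ah}$ and $\sip{Tk}{g} = \sip{Bh}{g}$ identify the set appearing in (ii) of the corollary with $\dt$, and condition (iii) of the corollary with condition (v) of Theorem \ref{T:Theorem1}.

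The first task will be to show that $T$ is well-defined, i.e., that $Ah = 0$ forces $Bh = 0$. Under (i) this is immediate from $Bh = SAh = 0$. Under (iii) the constant sequence $h_n \equiv h$ with $Ah = 0$ satisfies $\sip{Bh_n}{Ah_n} = 0 \to 0$ and $Bh_n \to Bh$, whence $Bh = 0$. Under (ii), every scalar multiple $\lambda h$ (with $Ah = 0$) satisfies $\sip{B(\lambda h)}{A(\lambda h)} = 0 \leq 1$, so finiteness of $\sup_\lambda |\sip{B(\lambda h)}{g}| = \sup_\lambda |\lambda|\cdot|\sip{Bh}{g}|$ for every $g$ in the prescribed dense set forces $\sip{Bh}{g} = 0$ throughout that set, hence $Bh = 0$. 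Positivity of $T$ then reads $\sip{T(Ah)}{Ah} = \sip{Bh}{Ah} \geq 0$, which is part of (ii), follows in case (i) from positivity of $S$, and is tacit in the setup of (iii); symmetry of $T$ is obtained by polarization of the positive form $(h_1, h_2) \mapsto \sip{Bh_1}{Ah_2}$.

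With $T$ recognised as a positive symmetric operator, Theorem \ref{T:Theorem1} closes the argument. In the forward direction, any positive self-adjoint $S$ with $SA \supseteq B$ is a positive self-adjoint extension of $T$, so conditions (ii) and (v) of Theorem \ref{T:Theorem1} applied to $T$ yield (ii) and (iii) of the corollary. Conversely, each of (ii) and (iii) of the corollary is equivalent to the corresponding condition of Theorem \ref{T:Theorem1} for $T$, and hence produces a positive self-adjoint extension $\widetilde T$ of $T$; the operator $S := \widetilde T$ then satisfies $SAh = \widetilde T(Ah) = Bh$ for every $h \in \dom B$, i.e., $SA \supseteq B$.

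The main bookkeeping will be the verification of well-definedness (and positive-symmetric structure) of $T$ under the weakest hypothesis (ii), where boundedness of the linear functional $h' \mapsto \sip{Bh'}{g}$ on unbounded rescalings of a fixed $h$ has to be converted into vanishing, together with a careful density argument. Everything else is a mechanical translation of Theorem \ref{T:Theorem1} through the substitution $k = Ah$.
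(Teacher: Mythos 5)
Your proposal is correct and follows essentially the same route as the paper: the paper likewise defines $T(Ax)\coloneqq Bx$ on the image of $\dom B$ under $A$ and invokes Theorem \ref{T:Theorem1}, dismissing the translation of conditions as ``straightforward.'' Your additional verification that $T$ is well defined under each of (i)--(iii) separately (and the polarization remark for symmetry) fills in bookkeeping the paper leaves implicit, but it is the same argument.
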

\begin{proof} Every operator $S$ that satisfies $SA\subseteq B$ is an extension of the  operator
\begin{equation*}
T:\ran A\to\hil, \qquad T(Ax)\coloneqq Bx.    
\end{equation*}
Hence, (i) is equivalent to the positive self-adjoint extendibility of $T$. The equivalence between (i)-(iii) follows from Theorem \ref{T:Theorem1} in a straightforward way.  
\end{proof}
\begin{remark}
We emphasize that in the proof of Corollary \ref{C:opequation}, we made significant use of the main advantage of  Theorem \ref{T:Theorem1}, according to which the operator $T$ under consideration does not have to be densely defined.  
\end{remark}

\section{The Krein-von Neumann extension}
Let $T:\hil\to\hil$ be a (not necessarily densely defined) positive symmetric operator that fulfills any (hence all) of the equivalent conditions of Theorem \ref{T:Theorem1}. Then the operator $J_T:\mathcal E_T\to\hil$ is densely defined and closable and it turned out from the proof of Theorem \ref{T:Theorem1} that 
\begin{equation*}
    T_N\coloneqq J_T^{**}J_T^*
\end{equation*}
is a positive self-adjoint extension of $T$. In this section we are going to show that $T_N$ is the smallest among all positive self-adjoint extensions of $T$, hence we shall call it the \textit{Krein-von Neumann extension} of $T$ (cf. \textbf{cccc}). The minimality of $T_N$ is   understood with respect to the so-called \textit{form order} `$\preceq$', which is a partial ordering among the set of all positive self-adjoint operators, defined by
\begin{equation}\label{E:formorder}
    S_1\preceq S_2\quad \overset{\mbox{def}}{\iff}\quad (I+S_2)^{-1}\leq (I+S_1)^{-1}.
\end{equation}
It can be proved that 
\begin{equation*}
    S_1\preceq S_2\quad \iff\quad \begin{cases}\dom S_2^{1/2}\subseteq \dom S_1^{1/2},\\
    \|S_1^{1/2}g\|^2\leq \|S_2^{1/2}g\|^2,\qquad g\in\dom S_2^{1/2},
    \end{cases}
\end{equation*}
wher $S_i^{1/2}$ stands for the unique positive self-adjoint square root of $S_i$ (see eg. \cite{SebTarcs2017}). 

Let now $S$ be a positive self-adjoint extension of $T$; we are going to prove that 
\begin{equation*}
    T_N\preceq S.
\end{equation*}
An easy calculation shows that $\dom J_T^*=\dt$ from which one concludes the identity
\begin{equation*}
    \dom T_N^{1/2}=\dom( J_T^{**}J_T^*)^{1/2}=\dt.
\end{equation*}
Furthermore, from the density of $\ran T$ in the energy space $\mathcal E_T$ (with respect to the inner product $\sipe{\cdot}{\cdot}$)
it follows that 
\begin{align*}
    \|T_N^{1/2}g\|^2&=\sipe{J_T^*g}{J_T^*g}\\
    &=\sup\set{\abs{\sipe{J_T^*g}{Th}}^2}{h\in\dom T,\sipe{Th}{Th}\leq 1}.
\end{align*}
Hence, using the identity 
\begin{equation*}
    \sipe{J_T^*g}{Th}=\sip{g}{Th}
\end{equation*}
we gain the useful formula
\begin{equation}\label{E:3.1}
    \|T_N^{1/2}g\|^2=\sup\set{\abs{\sip{g}{Th}}^2}{h\in\dom T,\sip{Th}{h}\leq 1}.
\end{equation}
Recall that the self-adjoint operator $S$ does not have any proper self-adjoint extension. Hence, using the above procedure with $S$ instead of $T$, we obtain that  $ S=S_N$ and also that  $$\dom S^{1/2}=\mathcal{D}_*(S)\subseteq \dt.$$ 
In particular, equality \eqref{E:3.1} applies to $S=S_N$ and gives
\begin{align*}
    \|S^{1/2}g\|^2&=\sup\set{\abs{\sip{g}{Sh}}^2}{h\in\dom S,\sip{Sh}{h}\leq 1}\\
    &\geq \sup\set{\abs{\sip{g}{Th}}^2}{h\in\dom T,\sip{Th}{h}\leq 1}\\
    &=\|T_N^{1/2}g\|^2,
\end{align*}
which means that $$T_N\preceq S.$$ 

What has just been proven can be summarized in the following result:
\begin{theorem}\label{T:KreinNeumann}
If the positive symmetric operator $T:\hil\to\hil$ has any positive self-adjoint extension (i.e., $T$ fulfills any of the equivalent conditions of Theorem 1) then $T_N\coloneqq J_T^{**}J_T^*$ is the smallest positive self-adjoint extension of $T$.
\end{theorem}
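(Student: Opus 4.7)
The plan is to use the variational formula for the square root of $T_N$ together with the standard characterization of the form order $\preceq$ in terms of domains of square roots and their norms. The first step is to note that the implication (vi)$\Rightarrow$(i) in Theorem \ref{T:Theorem1}, applied with $V = J_T^*$, already shows that $T_N = J_T^{**}J_T^*$ is a positive self-adjoint extension of $T$, so the nontrivial content is the minimality.

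Next I would identify $\dom T_N^{1/2}$ with $\D_*(T)$. Since $T_N = J_T^{**}J_T^*$ with $J_T$ densely defined and closable, one has $\dom T_N^{1/2} = \dom J_T^*$, and an easy computation (already recorded as \eqref{E:DT=domJT}) gives $\dom J_T^* = \D_*(T)$. Moreover $\|T_N^{1/2}g\|^2 = \sipe{J_T^*g}{J_T^*g}$, so using that $\ran T$ is dense in $\mathcal{E}_T$ together with the identity $\sipe{J_T^*g}{Th} = \sip{g}{Th}$, I can express
\begin{equation*}
    \|T_N^{1/2}g\|^2 = \sup\set{\abs{\sip{g}{Th}}^2}{h\in\dom T,\ \sip{Th}{h}\leq 1},\qquad g\in\D_*(T).
\end{equation*}

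Now let $S$ be any positive self-adjoint extension of $T$. Because $S$ is itself positive self-adjoint it has no proper positive self-adjoint extension, so applying the construction to $S$ yields $S_N = S$ and, in particular, $\dom S^{1/2} = \D_*(S)$. The inclusion $T \subseteq S$ immediately gives $\D_*(S) \subseteq \D_*(T)$ (a larger operator gives a stronger defining inequality on fewer vectors), so the first part of the form-order characterization, $\dom S^{1/2} \subseteq \dom T_N^{1/2}$, holds. Applying the variational formula to $S$ and comparing suprema — the constraint set for $S$ contains the constraint set for $T$ since $T \subseteq S$ and for $h \in \dom T$ one has $\sip{Sh}{h} = \sip{Th}{h}$ — yields $\|T_N^{1/2}g\|^2 \leq \|S^{1/2}g\|^2$ for every $g \in \dom S^{1/2}$, which by the stated characterization of $\preceq$ is exactly $T_N \preceq S$.

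The main obstacle I expect is making the variational identity for $\|T_N^{1/2}g\|^2$ fully rigorous: one must be careful that $\ran T$ is genuinely dense in $\mathcal{E}_T$ (which is built into the definition of the energy space) and that the supremum is really attained along vectors $Th$ with $\sipe{Th}{Th} = \sip{Th}{h} \leq 1$, so that the abstract Riesz-type identity on $\mathcal{E}_T$ translates into the concrete supremum over $\dom T$. Once this formula is in hand, the comparison with $S$ is essentially automatic, and the only remaining point is to invoke the equivalence between the two formulations of $\preceq$ from \eqref{E:formorder}.
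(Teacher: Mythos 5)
Your proposal is correct and follows essentially the same route as the paper: both establish the variational formula $\|T_N^{1/2}g\|^2=\sup\set{\abs{\sip{g}{Th}}^2}{h\in\dom T,\ \sip{Th}{h}\leq 1}$ via the density of $\ran T$ in $\mathcal{E}_T$, observe that $S=S_N$ so the same formula applies to $S$, and compare the two suprema using $T\subset S$ together with the square-root characterization of $\preceq$. The only cosmetic quibble is that the supremum need not be \emph{attained}; it is simply computed over the dense set $\set{Th}{h\in\dom T}$ in the unit ball of $\mathcal{E}_T$, which is all the argument requires.
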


\bigskip

In what follows we are going to establish the converse of Theorem \ref{T:KreinNeumann}. Namely, it will be proved that  a positive self-adjoint operator $S$ satisfing $T_N\preceq S$ mus be an extension of  $T$. The precise statement is given in the next result:
\begin{theorem}\label{T:Krein-converse}
Assume that the linear operator $T$ (acting in the real or complex Hilbert space $\hil$) satisfies the equivalent conditions of Theorem \ref{T:Theorem1}. For a given positive self-adjoint operator $S$ the following statements are equivalent:
\begin{enumerate}[label=\textup{(\roman*)}]
    \item $S$ is an extension of $T$,
    \item \begin{enumerate}[label=\textup{(\alph*)}]
        \item $T_N\preceq S$,
        \item $\dom T\subseteq \dom S^{1/2}$,
        \item $\|S^{1/2}f\|^2\leq \sip{Tf}{f}$ for every $f\in\dom T$.
    \end{enumerate}
\end{enumerate}
\end{theorem}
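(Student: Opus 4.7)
The plan splits into the two implications. The easy direction (i)$\Rightarrow$(ii) follows immediately: statement (a) is Theorem \ref{T:KreinNeumann}, while for $f\in\dom T\subseteq \dom S$ the identity $\|S^{1/2}f\|^2=\sip{Sf}{f}=\sip{Tf}{f}$ yields (b) and (c) simultaneously (indeed with equality in (c)).

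For the nontrivial direction (ii)$\Rightarrow$(i), the first step is to note that $T_N$ itself extends $T$, so $\dom T\subseteq \dom T_N$ and $\|T_N^{1/2}f\|^2=\sip{T_Nf}{f}=\sip{Tf}{f}$ for every $f\in\dom T$. Combining this equality with (a) (which gives $\|T_N^{1/2}f\|^2\leq \|S^{1/2}f\|^2$ on $\dom S^{1/2}\supseteq \dom T$) and with the inequality in (c) produces the sandwich
\[
\sip{Tf}{f}=\|T_N^{1/2}f\|^2\leq \|S^{1/2}f\|^2\leq \sip{Tf}{f},\qquad f\in\dom T,
\]
so that in fact $\|T_N^{1/2}f\|^2=\|S^{1/2}f\|^2$ for every $f\in\dom T$.

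The key step is to bootstrap this pointwise equality of quadratic forms to a sesquilinear identity involving arbitrary test vectors in $\dom S^{1/2}$. To this end I introduce the sesquilinear form
\[
q(f,g)\coloneqq \sip{S^{1/2}f}{S^{1/2}g}-\sip{T_N^{1/2}f}{T_N^{1/2}g},\qquad f,g\in\dom S^{1/2},
\]
which is well defined and positive semi-definite by (a). The previous paragraph shows that $q(f,f)=0$ whenever $f\in\dom T$, so the Cauchy--Schwarz inequality applied to $q$ forces $q(f,g)=0$ for every $f\in\dom T$ and every $g\in\dom S^{1/2}$. Since $f\in\dom T\subseteq \dom T_N$ we may rewrite $\sip{T_N^{1/2}f}{T_N^{1/2}g}=\sip{T_Nf}{g}=\sip{Tf}{g}$ for each $g\in\dom T_N^{1/2}\supseteq \dom S^{1/2}$, whence
\[
\sip{S^{1/2}f}{S^{1/2}g}=\sip{Tf}{g},\qquad f\in\dom T,\ g\in\dom S^{1/2}.
\]

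To conclude, this identity says that for each fixed $f\in\dom T$ the linear functional $g\mapsto \sip{S^{1/2}f}{S^{1/2}g}$ on $\dom S^{1/2}$ is represented in $\hil$ by the vector $Tf$. By self-adjointness of $S^{1/2}$ this is exactly the condition $S^{1/2}f\in\dom S^{1/2}$ with $S^{1/2}(S^{1/2}f)=Tf$; equivalently, $f\in\dom S$ and $Sf=Tf$, so $S\supseteq T$. The step I expect to be the crux is the Cauchy--Schwarz trick on $q$: we start only with quadratic-form equality on the small set $\dom T$ and must promote it to sesquilinear equality against all vectors in the a priori much larger space $\dom S^{1/2}$, without having any density of $\dom T$ in $\dom S^{1/2}$ at our disposal.
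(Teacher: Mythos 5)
Your argument is correct and coincides with the paper's own proof: the forward direction is handled identically, and for (ii)$\Rightarrow$(i) the paper introduces exactly the same semi-definite form $p(f,g)=\sip{S^{1/2}f}{S^{1/2}g}-\sip{T_N^{1/2}f}{T_N^{1/2}g}$ on $\dom S^{1/2}$, uses (a) and (c) to get $p(f,f)=0$ on $\dom T$, and applies Cauchy--Schwarz to conclude $\sip{S^{1/2}g}{S^{1/2}f}=\sip{g}{Tf}$, hence $f\in\dom S$ and $Sf=Tf$. The only difference is that you spell out the sandwich $\sip{Tf}{f}=\|T_N^{1/2}f\|^2\leq\|S^{1/2}f\|^2\leq\sip{Tf}{f}$, which the paper leaves implicit.
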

\begin{proof}
Assume first that $S$ is a positive self-adjoint extension of $T$. Then $T_N\preceq S$ by Theorem \ref{T:KreinNeumann}. On the other hand, $\dom T\subseteq \dom S\subseteq \dom S^{1/2}$ and 
\begin{equation*}
    \sip{Tf}{f}=\sip{Sf}{f}=\|S^{1/2}f\|^2
\end{equation*}
for every $f\in\dom T$.

To prove the converse direction suppose that the positive self-adjoint operator $S$ satisfies conditions (ii) (a)-(c). Introduce the semi inner product $p$ on $\dom S^{1/2}$ as 
\begin{equation*}
    p(f,g)\coloneqq \sip{S^{1/2}f}{S^{1/2}g}-\sip{T_N^{1/2}f}{T_N^{1/2}g},\qquad f,g\in\dom S^{1/2}.
\end{equation*}
By (a) and (c) we have $p(f,f)=0$ for $f\in\dom T$, thus 
\begin{equation*}
    p(g,f)=0,\qquad f\in\dom T, g\in\dom S^{1/2},
\end{equation*}
according to the Cauchy-Schwarz inequality. Consequently, 
\begin{align*}
\sip{S^{1/2}g}{S^{1/2}f}=\sip{T_N^{1/2}g}{T_N^{1/2}f}=\sip{g}{Tf}
\end{align*}
for every $f\in\dom T$ and $g\in\dom S^{1/2}$. From this last identity we see  that $S^{1/2}f\in\dom S^{1/2}$ (or equivalently, $f\in\dom S$) and also  $Sf=S^{1/2}(S^{1/2}f)=Tf$. This in turn means that $S$ is an extension of $T$.
\end{proof}
As an immediate consequence we obtain the following statement (see \cite{SebStoch2007}*{Corollary 10}):
\begin{corollary}\label{C:Cor10}
Let $T:\hil\to\hil$ be a positively closable symmetric operator and let $R,S:\hil\to\hil$ be positive self-adjoint operators. Suppose that $T\subset R$ and $T_N\preceq S\preceq R$ then also $T\subset S$.  
\end{corollary}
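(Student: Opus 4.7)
The plan is to deduce the conclusion directly from Theorem \ref{T:Krein-converse} applied to $S$: since $T_N \preceq S$ is already assumed, it suffices to verify the two remaining conditions of part (ii), namely that $\dom T \subseteq \dom S^{1/2}$ and that $\|S^{1/2}f\|^2 \leq \sip{Tf}{f}$ for every $f \in \dom T$. The strategy is to transfer these conditions from $R$ (where they hold because $R$ extends $T$) to $S$ via the form-order inequality $S \preceq R$.

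First I would apply Theorem \ref{T:Krein-converse} in the direction (i)$\Rightarrow$(ii) to the pair $(T,R)$. Since $T \subset R$, this yields $\dom T \subseteq \dom R^{1/2}$ and
\begin{equation*}
    \|R^{1/2}f\|^2 = \sip{Rf}{f} = \sip{Tf}{f}, \qquad f \in \dom T,
\end{equation*}
where the first equality uses that $f \in \dom R \subseteq \dom R^{1/2}$ and the second uses $R \supset T$.

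Next I would use the form-order characterization recalled just after \eqref{E:formorder}: from $S \preceq R$ one has $\dom R^{1/2} \subseteq \dom S^{1/2}$ and $\|S^{1/2}g\|^2 \leq \|R^{1/2}g\|^2$ for every $g \in \dom R^{1/2}$. Combining this with the previous step gives $\dom T \subseteq \dom R^{1/2} \subseteq \dom S^{1/2}$, which is condition (b), and for every $f \in \dom T$,
\begin{equation*}
    \|S^{1/2}f\|^2 \leq \|R^{1/2}f\|^2 = \sip{Tf}{f},
\end{equation*}
which is condition (c). With (a) holding by hypothesis, all three conditions of Theorem \ref{T:Krein-converse}(ii) are verified, so $S \supset T$.

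There is no real obstacle here: the argument is essentially a one-line application of the converse direction of Theorem \ref{T:Krein-converse}, with the form order playing the role of transferring the quadratic-form estimate from $R$ to $S$. The only point that requires a moment's attention is the use of the equivalent description of $\preceq$ in terms of the square roots, but this is explicitly stated in the paper and follows from standard monotonicity of the resolvent.
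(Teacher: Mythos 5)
Your proof is correct and follows essentially the same route as the paper: both verify conditions (a)--(c) of Theorem \ref{T:Krein-converse}(ii) by chaining $\dom T\subseteq\dom R\subseteq\dom R^{1/2}\subseteq\dom S^{1/2}$ and $\sip{Tf}{f}=\|R^{1/2}f\|^2\geq\|S^{1/2}f\|^2$ via the square-root characterization of the form order. The only cosmetic difference is that you invoke the forward direction of Theorem \ref{T:Krein-converse} to get $\dom T\subseteq\dom R^{1/2}$, where the paper simply uses $\dom T\subseteq\dom R\subseteq\dom R^{1/2}$ directly.
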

\begin{proof}
From the assumptions it follows that 
\begin{equation*}
    \dom T\subseteq \dom R\subseteq \dom R^{1/2}\subseteq \dom S^{1/2},
\end{equation*}
and also  
\begin{equation*}
    \sip{Tf}{f}=\|R^{1/2}f\|^2\geq \|S^{1/2}f\|^2
\end{equation*}
for every $f\in\dom T$. By Theorem \ref{T:Krein-converse} we conclude that $T\subset S$.
\end{proof}

We close the section by providing a necessary and sufficient condition on a positive self-adjoint operator $S$ to be equal to the Krein-von Neumann extension of $T$:
\begin{theorem}
Let $T$ be a (not necessarily densely defined) positive operator in the Hilbert space $\hil$ and let $S$ be a positive self-adjoint extension of $T$. The following statements are equivalent:
\begin{enumerate}[label=\textup{(\roman*)}]
    \item $T_N=S$,
    \item $\ran T_N^{1/2}=\ran S^{1/2}$.
\end{enumerate}
\end{theorem}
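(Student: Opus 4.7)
The implication (i) $\Rightarrow$ (ii) is immediate: by uniqueness of the positive self-adjoint square root, $T_N=S$ gives $T_N^{1/2}=S^{1/2}$ and hence equality of ranges. For the non-trivial direction, set $R\coloneqq \ran T_N^{1/2}=\ran S^{1/2}$ and recall from Theorem~\ref{T:KreinNeumann} that $T_N\preceq S$. My plan is to derive the reverse form order $S\preceq T_N$. Once both form orders are in force, the closed quadratic forms of $T_N$ and $S$ coincide on $\dom T_N^{1/2}=\dom S^{1/2}$, so $T_N=S$. (Equivalently, Theorem~\ref{T:Krein-converse} applied with $S$ in place of $T$ then identifies $T_N$ as an extension of the self-adjoint operator $S$, which closes the argument.)

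The heart of the proof is a norm-comparison lemma on $R$. For a positive self-adjoint operator $A$, each $y\in \ran A^{1/2}$ has a unique representation $y=A^{1/2}u$ with $u\in (\ker A)^{\perp}\cap \dom A^{1/2}$; set $\|y\|_A\coloneqq \|u\|$. A short calculation using the self-adjointness of $A^{1/2}$ and the density of $\ran A^{1/2}$ in $(\ker A)^{\perp}$ gives the formula
\begin{equation*}
    \|y\|_A=\sup\left\{\frac{\abs{\sip{\phi}{y}}}{\|A^{1/2}\phi\|}\,:\,\phi\in\dom A^{1/2},\ A^{1/2}\phi\ne 0\right\}.
\end{equation*}
Applied to $A=T_N$ and $A=S$ in tandem with $T_N\preceq S$ (which shrinks the admissible set of $\phi$ and enlarges the denominator) this immediately yields $\|y\|_S\le \|y\|_{T_N}$ for every $y\in R$. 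On the subspace $\ran T\subset R$ the two norms agree: for $f\in\dom T$ both $\|Tf\|_{T_N}^2$ and $\|Tf\|_S^2$ are equal to $\sip{Tf}{f}$, because $T_N$ and $S$ both extend $T$. Finally, the polar decomposition $J_T^*=W T_N^{1/2}$ realizes $W$ as an isometry of $(\ker T_N)^{\perp}$ onto $\mathcal E_T$ and identifies $(R,\|\cdot\|_{T_N})$ isometrically with a dense subspace of $\mathcal E_T$; since $\ran T=J_T^*(\dom T)$ is dense in $\mathcal E_T$ by construction of the energy space, it is also $\|\cdot\|_{T_N}$-dense in $R$. Approximating an arbitrary $y\in R$ by $Tf_n\in\ran T$ in $\|\cdot\|_{T_N}$, and using $\|\cdot\|_S\le\|\cdot\|_{T_N}$ to deduce $Tf_n\to y$ also in $\|\cdot\|_S$, the identity $\|Tf_n\|_{T_N}=\|Tf_n\|_S$ passes to the limit and produces $\|y\|_{T_N}=\|y\|_S$ throughout $R$.

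Given the lemma, the theorem follows quickly. Fix $g\in\dom T_N^{1/2}$ and $h\in\dom S$ with $\sip{Sh}{h}\le 1$. Since $Sh\in\ran S\subseteq R$, write $Sh=T_N^{1/2}p$ with $p\in(\ker T_N)^{\perp}\cap\dom T_N^{1/2}$; the lemma yields $\|p\|=\|Sh\|_{T_N}=\|Sh\|_S=\sqrt{\sip{Sh}{h}}\le 1$. Hence
\begin{equation*}
    \abs{\sip{g}{Sh}}=\abs{\sip{T_N^{1/2}g}{p}}\le \|T_N^{1/2}g\|.
\end{equation*}
Taking the supremum over such $h$ and invoking the counterpart of \eqref{E:3.1} for the self-adjoint operator $S$ gives $\|S^{1/2}g\|\le \|T_N^{1/2}g\|<+\infty$, so $g\in \dom S^{1/2}$; this is precisely $S\preceq T_N$. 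The only step that is not entirely routine is the $\|\cdot\|_{T_N}$-density of $\ran T$ in $R$---that is, the identification of $(R,\|\cdot\|_{T_N})$ with a dense subspace of $\mathcal E_T$ via the polar decomposition of $J_T^*$; once this density is in hand, everything else is a direct application of formula \eqref{E:3.1} and its obvious analogue for $S$.
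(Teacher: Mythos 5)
Your proof is correct, and it takes a genuinely different route from the paper's. The paper works inside the energy space $\mathcal{E}_S$ of $S$: the range equality together with the injectivity of $J_S^{**}$ is used to show that $\ran T$ is dense in $\mathcal{E}_S$, so that the restriction $J_0$ of $J_S^{**}$ to $\ran T$ satisfies $J_0^{**}J_0^{*}=T_N$ while $J_0^{**}=J_S^{**}$ (same kernel and range plus the inclusion $J_0^{**}\subset J_S^{**}$), whence $T_N=J_S^{**}J_S^{*}=S$. You instead equip $\ran A^{1/2}$ with the operator-range norm $\|\cdot\|_A$, prove via the sup formula that it is antitone with respect to the form order, check that $\|\cdot\|_{T_N}$ and $\|\cdot\|_S$ agree on $\ran T$, and use the hypothesis-free density of $\ran T$ in $\bigl(\ran T_N^{1/2},\|\cdot\|_{T_N}\bigr)$ --- this is where the polar decomposition of $J_T^{*}$ and the density of $\ran T$ in $\mathcal{E}_T$ enter --- to propagate the norm equality over the whole common range; formula \eqref{E:3.1} applied to $S$ then converts this into $S\preceq T_N$, and antisymmetry of $\preceq$ finishes. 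Both arguments ultimately rest on density of $\ran T$ in an energy-type space, but they consume the hypothesis at different points: the paper uses $\ran T_N^{1/2}=\ran S^{1/2}$ to obtain density in $\mathcal{E}_S$, whereas you get density for free on the $T_N$-side and use one inclusion to make $\|Tf_n-y\|_S$ meaningful in the limiting step (so that $\|\cdot\|_S\le\|\cdot\|_{T_N}$ can be applied to $Tf_n-y$) and the other inclusion to write $Sh=T_N^{1/2}p$ in the endgame --- a point worth stating explicitly in a polished write-up. Your version buys an argument that stays in $\hil$ and avoids the $J_0$ bookkeeping, at the cost of setting up the range-norm machinery; the paper's is shorter given that the $J$-formalism is already in place.
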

\begin{proof}
Let  $S$ be a self-adjoint extension of $T$ satifying range identity (ii). Consider the energy space $\mathcal E_S$ associated with $S$. We prove first that $\ran T$ is dense in $\mathcal E_S$ (with respect to the scalar product $\sips{\cdot}{\cdot}$ induced by $S$). To do so it suffices to prove that $\dom J_S^{**}\subseteq \overline{\ran T}$ where the closure is taken with respect to $\sips{\cdot}{\cdot}$. Consider a vector $\xi\in\mathcal \dom J_S^{**}$. Then $J_S^{**}\xi\in \ran S^{1/2}=\ran T_N^{1/2}$, hence there is a sequence $\seq f$ in $\dom T$ such that 
\begin{equation*}
    \sip{T(f_n-f_m)}{f_n-f_m}\to 0\qquad\mbox{and}\qquad Tf_n\to J^{**}_S\xi\in\hil.
\end{equation*}
Since we have $T\subset S$, it follows that $Tf_n=Sf_n$ and therefore  \begin{equation*}
    \sips{S(f_n-f_m)}{S(f_n-f_m)}\to0\qquad \mbox{and}\qquad J^{**}_{S}(Sf_n)\to J^{**}_S\xi.
\end{equation*}
Consequently, $Tf_n=Sf_n\to \zeta\in\mathcal{E}_S$, hence $\zeta\in \dom J_S^{**}$ and $J_S^{**}\zeta=J_S^{**}\xi$. By injectivity of $J_S^{**}$ we get $\xi=\zeta$ which in turn shows that $\xi$ belongs to the closure of $\ran T$ in $\mathcal E_S$. Since $\dom J_S^{**}$ is a dense subspace in $\mathcal E_S$, so is $\ran T$.

Let us denote by $J_0$ the restriction of $J_S^{**}$ to $\ran T$:
\begin{equation*}
    J_0\coloneqq J_S^{**}|_{\ran T}.
\end{equation*}
By the first part of the proof we see that $J_0:\mathcal{E}_S\to\hil$ is densely defined acting by 
\begin{equation*}
    J_0(Tf)=Tf,\qquad f\in\dom T.
\end{equation*}
It is easy to see that $J_0^{**}J_0^*$ is a positive self-adjoint extension of $T$. Besides, an argument similar to the calculation presented at the beginning of this section shows that 
\begin{equation*}
    \dom (J_0^{**}J_0^*)^{1/2}=\dom J_0^*=\dt=\dom J_T^*,
\end{equation*}
and that
\begin{equation*}
    \|(J_0^{**}J_0^*)^{1/2}g\|^2=\|(J_T^{**}J_T^*)^{1/2}g\|^{2},\qquad g\in\dt,
\end{equation*}
whence $J_0^{**}J_0^*=J_T^{**}J_T^*=T_N$. This identity shows that 
\begin{equation*}
    \ker J_0^{**}=\ker J_S^{**}=\{0\} \qquad \mbox{and}\qquad \ran J_0^{**}=\ran T_N^{1/2}=\ran J_S^{**}.
\end{equation*}
Since $J_0^{**}\subset J_S^{**}$, it follows that $J_0^{**}=J_S^{**}$ and therefore 
\begin{equation*}
    T_N=J_0^{**}J_0^*=J_S^{**}J_S^*=S,
\end{equation*}
as claimed.
\end{proof}

\section{The Friedrichs extension}
In his classical paper \cite{Friedrichs} K. Friedrichs proved that a \textit{densely defined} positive symmetric operator has at least one positive self-adjoint extension. In \cite{Krein12} Krein proved that the extension constructed by Friedrichs is the largest possible extension of $T$ (with respect to the form order \eqref{E:formorder}) and that the positive self-adjoint extensions of $T$ form an ``operator interval'' $[T_N,T_F]$. Here, $T_N$ is the Krein-von Neumann extension of $T$ while $T_F$ is the so called \textit{Friedrichs extension} of $T$ which will be investigated below in detail. 

In this section we are going to construct the largest extension $T_F$ in a way that is completely different from Friedrichs’ original approach. In fact, our procedure is more in line with the reasoning of \cite{Prokaj} and uses a factorization method through the energy space $\mathcal E_T$. However, the proof given here is somewhat shorter and simpler.  

It is easy to check that the set of positive self-adjoint extensions of a non-densely defined positive operator cannot have a largest element. For this reason, throughout the remainder of this section we assume that the positive operator $T:\hil\to\hil$ is densely defined. Recall that also the Krein-von Neumann extension of such an operator $T$  automatically exists according to Remark following Theorem \ref{T:Theorem1}.  

Keeping in mind the notations of Section \ref{S:section2} one concludes that 
\begin{equation*}
    \dom T\subseteq \dt=\dom J_T^*.
\end{equation*}
Hence the restriction $Q_T$ of $J^*_T$ to $\dom T$ is a densely defined (and necessarily closable) operator. By formula \eqref{E:VgVh}, $Q_T:\dom T\subseteq \hil\to \mathcal E_T$ acts by
\begin{equation}\label{E:QT*}
    Q_Tf=Tf,\qquad f\in\dom T.
\end{equation}
The closure $Q_T^{**}$ of $Q_T$ can be described as  
\begin{equation}
    \set{g\in\hil}{g_n\to g, \sip{T(g_n-g_m)}{g_n-g_m}\to 0 \quad \mbox{for some $(g_n)\subset \dom T$}}. 
\end{equation}
From  formulas $J_T^{**}\subset Q_T^*$ and \eqref{E:QT*} it follows that 
\begin{equation*}
    T_F\coloneqq Q_T^*Q_T^{**}
\end{equation*}
is a positive and self-adjoint extension of $T$. Our claim is to show that $T_F$ is equal to the largest extension of $T$, that is, $S\preceq T_F$ holds for every positive self-adjoint extension $S$ of $T$. 

For let $S$ be a positive self-adjoint extension of $T$. By repeating the above procedure with $S$ instead of $T$, we get that
\begin{equation}
    S=S_F=Q_S^*Q_S^{**}.
\end{equation}
As a consequence,  $\dom S^{1/2}=\dom Q_S^{**}$. Hence $S\supset T$ implies 
\begin{align*}
    \dom S^{1/2}&=\set{g\in\hil}{g_n\to g, \sip{S(g_n-g_m)}{g_n-g_m}\to 0 \quad \mbox{for some $(g_n)\subset \dom S$}}\\
    &\supseteq\set{g\in\hil}{g_n\to g, \sip{T(g_n-g_m)}{g_n-g_m}\to 0 \quad \mbox{for some $(g_n)\subset \dom T$}}\\
    &=\dom T_F^{1/2}.
\end{align*}
On the other hand, for $g\in\dom S^{1/2}$ we have 
\begin{equation*}
    \|S^{1/2}g\|^2=\lim_{n\to+\infty} \sip{Sg_n}{g_n} 
\end{equation*}
with $g_n\in\dom S$, $g_n\to g$ and $\sip{S(g_n-g_m)}{g_n-g_m}\to0$ because 
\begin{equation*}
     \|S^{1/2}g\|^2=\lim_{n\to+\infty} \sip{Q_S^{**}g_n}{Q_S^{**}g_n}_S=\lim_{n\to+\infty} \sip{Sg_n}{g_n}.
\end{equation*}
Similarly, for $g\in\dom T^{1/2}_F$ we have 
\begin{equation*}
    \|T_F^{1/2}g\|^2=\lim_{n\to+\infty} \sip{Tg_n}{g_n}=\lim_{n\to+\infty} \sip{Sg_n}{g_n}= \|S^{1/2}g\|^2
\end{equation*}
with $g_n\in\dom T$, $g_n\to g$ and $\sip{S(g_n-g_m)}{g_n-g_m}\to0$. Hence
\begin{equation}\label{E:TF12g}
    \|T_F^{1/2}g\|^2=\|S^{1/2}g\|^2,\qquad g\in\dom T_F^{1/2},
\end{equation}
and therefore $S\preceq T_F$, as it is claimed.

\medskip

With the above considerations we have just proved the following result:
\begin{theorem}\label{T:Friedrichs}
If $T:\hil\to\hil$ is a densely defined positive symmetric operator, then $T_F\coloneqq Q_T^{*}Q_T^{**}$ is the largest positive self-adjoint extension of $T$. The domain $\dom T_F^{1/2}$ of $T_F^{1/2}$ consists of those vectors $g$ for which there exists a sequence $(g_n)\subset \dom T$ such that $g_n\to g$ and $\sip{T(g_n-g_m)}{g_n-g_m}\to0$. In that  case,
\begin{equation*}
    \|T_F^{1/2}g\|^2=\limn\sip{Tg_n}{g_n}.
\end{equation*}
\end{theorem}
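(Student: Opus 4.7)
The plan is to mirror the factorization approach used for the Krein--von Neumann extension but through the operator $Q_T$ rather than $J_T$. First I would verify that $T_F\coloneqq Q_T^*Q_T^{**}$ is a positive self-adjoint extension of $T$. Since $\dom T\subseteq \dt=\dom J_T^*$ and $\dom T$ is dense, $Q_T$ is densely defined. By \eqref{E:JT*} it agrees with $T$ on $\dom T$, so $J_T^{**}\subset Q_T^*$, which combined with \eqref{E:QT*} yields $Q_T^*(Tf)=Tf$ -- wait, more carefully, one applies von Neumann's theorem: since $Q_T$ is densely defined and closable, $Q_T^*Q_T^{**}$ is positive self-adjoint, and the identity $Q_Tf=Tf$ for $f\in\dom T$ together with $\dom T\subseteq \dom Q_T^*$ forces $T_F f=Q_T^*Q_Tf=Q_T^*(Tf)=Tf$. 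So $T\subset T_F$.

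Next I would prove the maximality. Take an arbitrary positive self-adjoint extension $S$ of $T$ and apply the same factorization to $S$ in place of $T$: this produces $S_F=Q_S^*Q_S^{**}$, a self-adjoint extension of the self-adjoint operator $S$, hence $S=Q_S^*Q_S^{**}$. In particular, $\dom S^{1/2}=\dom Q_S^{**}$ and this domain admits the description
\begin{equation*}
    \dom S^{1/2}=\set{g\in\hil}{\exists (g_n)\subset \dom S,\ g_n\to g,\ \sip{S(g_n-g_m)}{g_n-g_m}\to 0},
\end{equation*}
with $\|S^{1/2}g\|^2=\limn \sip{Sg_n}{g_n}$ along any such approximating sequence.

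The crucial comparison step is: every sequence $(g_n)\subset \dom T$ that is $\sip{T\cdot}{\cdot}$-Cauchy is automatically $\sip{S\cdot}{\cdot}$-Cauchy (since $S\supset T$ gives $Tg_n=Sg_n$), from which $\dom T_F^{1/2}=\dom Q_T^{**}\subseteq \dom Q_S^{**}=\dom S^{1/2}$ follows at once. On $\dom T_F^{1/2}$, approximating by $(g_n)\subset\dom T$ gives
\begin{equation*}
    \|T_F^{1/2}g\|^2=\limn \sip{Tg_n}{g_n}=\limn \sip{Sg_n}{g_n}=\|S^{1/2}g\|^2,
\end{equation*}
so by the characterization of the form order recalled in Section \ref{S:section2}, $S\preceq T_F$. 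The description of $\dom T_F^{1/2}$ and the limit formula for $\|T_F^{1/2}g\|^2$ in the statement are simply specializations of the $S$-description above to $S=T_F$.

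The main technical point -- and the only step that is not purely formal -- is the identification $\dom (Q_T^*Q_T^{**})^{1/2}=\dom Q_T^{**}$ together with the intrinsic description of that domain as Cauchy-completion vectors, for which one uses that the domain of $Q_T^{**}$ is exactly the set of $\hil$-limits of $\sip{T\cdot}{\cdot}$-Cauchy sequences from $\dom T$ (a consequence of the definition of the energy space $\mathcal{E}_T$ and the closability of $Q_T$). Everything else reduces to bookkeeping with von Neumann's theorem $A=(A^{1/2})^*A^{1/2}$ applied to $A=Q_T^*Q_T^{**}$ and to $A=S$.
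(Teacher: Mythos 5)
Your proposal is correct and follows essentially the same route as the paper: factor through the energy space via $Q_T=J_T^*|_{\dom T}$, use $J_T^{**}\subset Q_T^*$ to see that $T_F=Q_T^*Q_T^{**}$ extends $T$, then apply the same construction to an arbitrary positive self-adjoint extension $S$ (so that $S=Q_S^*Q_S^{**}$) and compare the intrinsic Cauchy-sequence descriptions of $\dom Q_T^{**}$ and $\dom Q_S^{**}$ to obtain $S\preceq T_F$. The only blemish is the statement ``$\dom T\subseteq\dom Q_T^*$'', which should read $\ran T\subseteq\dom Q_T^*$ (as a subspace of $\mathcal E_T$), but your subsequent computation makes clear this is what you meant.
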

Combining Theorems \ref{T:KreinNeumann}, Theorem \ref{T:Friedrichs}, and Corollary \ref{C:Cor10} we conclude the following revised form of a fundamental result by Krein \cite{Krein12}: 
\begin{corollary}
The positive self-adjoint extensions of a densely defined positive self-adjoint operator $T$ (acting in a real or complex Hilbert space $\hil$) form an operator interval
\begin{equation}\label{E:TNTF}
    [T_N,T_F]=\set{S=S^*}{T_N\preceq S\preceq T_F},
\end{equation}
where $T_N=J_T^{**}J_T^*$ is the Krein-von Neumann extension, while $T_F=Q_T^{*}Q_T^{**}$ is the Friedrichs extension of $T$. 
\end{corollary}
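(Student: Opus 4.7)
The plan is to prove the displayed equality by double inclusion, and to observe that both directions follow almost immediately from the three results just cited, so little new work is required.

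For the inclusion $\subseteq$, I would take an arbitrary positive self-adjoint extension $S$ of $T$ and apply Theorem \ref{T:KreinNeumann} to conclude $T_N\preceq S$, then apply Theorem \ref{T:Friedrichs} to conclude $S\preceq T_F$. This places $S$ in the interval $[T_N,T_F]$ and requires no additional calculation, since both bounds have been established in the exact generality needed.

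For the reverse inclusion $\supseteq$, I would start from an arbitrary positive self-adjoint $S$ with $T_N\preceq S\preceq T_F$ and show that $S$ actually extends $T$. The natural tool is Corollary \ref{C:Cor10}: setting $R\coloneqq T_F$, we have $T\subset R$ (since $T_F$ is an extension of $T$ by Theorem \ref{T:Friedrichs}) and $T_N\preceq S\preceq R$ by hypothesis, so the corollary yields $T\subset S$. Hence $S$ is a positive self-adjoint extension of $T$, which is what the right-hand side of \eqref{E:TNTF} asks for.

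The one technical point worth checking, which I would mention explicitly before invoking Corollary \ref{C:Cor10}, is that $T$ is positively closable in the sense of Ando--Nishio (condition (v) of Theorem \ref{T:Theorem1}). This is automatic here because $T$ is densely defined and positive, so inclusion \eqref{E:domTreszeDT} forces $\D_*(T)$ to be dense and thus condition (ii) of Theorem \ref{T:Theorem1} holds (as noted in Remark (1) following Theorem \ref{T:Theorem1}). There is no real obstacle in the argument; the only thing to be careful about is to phrase the proof so that the hypothesis of Corollary \ref{C:Cor10} is visibly satisfied and so that the reader sees that \eqref{E:TNTF} is a direct synthesis of Theorems \ref{T:KreinNeumann}, \ref{T:Friedrichs} and Corollary \ref{C:Cor10}, with no further spectral- or form-theoretic input needed.
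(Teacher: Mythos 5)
Your proposal is correct and follows exactly the route the paper takes: the paper states the corollary as an immediate consequence of "combining Theorems \ref{T:KreinNeumann}, \ref{T:Friedrichs}, and Corollary \ref{C:Cor10}", which is precisely your double-inclusion argument (the two theorems for $\subseteq$, Corollary \ref{C:Cor10} with $R=T_F$ for $\supseteq$). Your explicit check that the densely defined positive $T$ satisfies the hypotheses of Corollary \ref{C:Cor10} via \eqref{E:domTreszeDT} is a welcome detail the paper leaves implicit.
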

\section{Krein's uniqueness  criterion}
In this section we are going to investigate the problem of uniqueness of the positive self-adjoint extensions. It is easy to check that uniqueness can appear only in the densely defined case. As we have seen in the preceding sections, positive self-adjoint extensions of a densely defined positive symmetric operator $T$ form an operator interval \eqref{E:TNTF},
where $T_N$ is the Krein-von Neumann extension, while $T_F$ is the Friedrichs extension of $T$. Taking into account of that, we see that $T$ has a unique positive self-adjoint extension if and only if its minimal and maximal extension coincide, i.e,  $T_N=T_F$. 

In the present section our main goal is to revise Krein's uniqueness condition. To do so we are going to present first a formula for a positive self-adjoint operator $S$ to agree with the Friedrichs extension of $T$:
\begin{theorem}\label{T:S=TF}
Let $T$ be a densely defined positive symmetric operator in the real or complex Hilbert space $\hil$ and let $S$ be a positive self-adjoint extension of $T$. Then the following statements are equivalent:
\begin{enumerate}[label=\textup{(\roman*)}]
    \item $S=T_F$,
    \item $\ker(I+T^*)\cap \dom S^{1/2}=\{0\}.$
\end{enumerate}
\end{theorem}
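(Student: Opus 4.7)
The plan is to exploit the natural Hilbert space structure on $\dom S^{1/2}$ provided by the graph inner product of $S^{1/2}$, and to identify $\dom T_F^{1/2}$ as a closed subspace inside it via Theorem \ref{T:Friedrichs}.

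For $(i)\Rightarrow(ii)$, I take $k\in\ker(I+T^*)\cap \dom T_F^{1/2}$ and apply Theorem \ref{T:Friedrichs} to produce $\seq k\subset \dom T$ with $k_n\to k$ in $\hil$, $\sip{T(k_n-k_m)}{k_n-k_m}\to 0$, and $\|T_F^{1/2}k\|^2=\limn\sip{Tk_n}{k_n}$. The eigen-equation $T^*k=-k$ immediately yields $\sip{Tk_n}{k}=-\sip{k_n}{k}\to -\|k\|^2$. On the other hand $(Tk_n)$ is Cauchy in the energy space $\mathcal E_T$ with limit $Q_T^{**}k$; letting $m\to\infty$ in the defining identity $\sipe{Tk_n}{Tk_m}=\sip{Tk_n}{k_m}$ gives $\sipe{Tk_n}{Q_T^{**}k}=\sip{Tk_n}{k}$, and then letting $n\to\infty$ yields $\sip{Tk_n}{k}\to\|Q_T^{**}k\|_{\mathcal E_T}^2=\|T_F^{1/2}k\|^2$. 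Matching the two limits gives $\|T_F^{1/2}k\|^2=-\|k\|^2$, which forces $k=0$.

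For $(ii)\Rightarrow(i)$, I endow $\dom S^{1/2}$ with the graph inner product
\begin{equation*}
    (f,g)_S\coloneqq \sip{f}{g}+\sip{S^{1/2}f}{S^{1/2}g},
\end{equation*}
which makes it a Hilbert space since $S^{1/2}$ is closed. For $f,g\in\dom T\subseteq\dom S$ the identity $\sip{S^{1/2}f}{S^{1/2}g}=\sip{Sf}{g}=\sip{Tf}{g}$ reduces $(f,g)_S$ to $\sip{f}{g}+\sip{Tf}{g}$, which by Theorem \ref{T:Friedrichs} is precisely the $T_F$-graph inner product. It follows that the $(\cdot,\cdot)_S$-closure $M$ of $\dom T$ inside $\dom S^{1/2}$ is isometrically identified with $\dom T_F^{1/2}$ equipped with its own graph norm; in particular $M\subseteq \dom T_F^{1/2}$ with coinciding norms. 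I then compute $M^\perp$: for $k\in M^\perp$ and $f\in\dom T$, orthogonality reads
\begin{equation*}
    0=(f,k)_S=\sip{f}{k}+\sip{Sf}{k}=\sip{f}{k}+\sip{Tf}{k},
\end{equation*}
so $\sip{Tf}{k}=\sip{f}{-k}$ and therefore $k\in\dom T^*$ with $T^*k=-k$. Thus $M^\perp\subseteq \ker(I+T^*)\cap \dom S^{1/2}=\{0\}$ by hypothesis, which forces $\dom S^{1/2}=M=\dom T_F^{1/2}$ with equal graph norms; equivalently $\|S^{1/2}g\|=\|T_F^{1/2}g\|$ throughout, so the quadratic forms of $S$ and $T_F$ coincide and $S=T_F$.

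The main obstacle I anticipate is the clean Hilbert-space identification of $M$ with $\dom T_F^{1/2}$: one must be certain that $(\cdot,\cdot)_S$-Cauchy sequences in $\dom T$ and $T_F^{1/2}$-graph-Cauchy sequences are exactly the same objects with identical $\hil$-limits, which is exactly what Theorem \ref{T:Friedrichs} guarantees. Once that identification is made, the orthogonal decomposition $\dom S^{1/2}=M\oplus M^\perp$ and its interpretation in terms of $\ker(I+T^*)$ do all the work.
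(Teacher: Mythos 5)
Your proof is correct and follows essentially the same route as the paper: the direction (i)$\Rightarrow$(ii) is the same limit computation yielding $\|T_F^{1/2}k\|^2=-\|k\|^2$, and your (ii)$\Rightarrow$(i) is the paper's orthogonality argument transplanted from the graph of $Q_S^{**}$ inside $\hil\times\mathcal{E}_S$ to its isometric copy $(\dom S^{1/2},(\cdot,\cdot)_S)$, with your subspace $M$ playing the role of $\dom Q_0^{**}$ and the identity $Q_0^*Q_0^{**}=T_F$ appearing as your isometric identification of $M$ with $\dom T_F^{1/2}$.
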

\begin{proof}
Before we start proving the desired equivalence, let us make a few observations. Consider the energy space $\mathcal{E}_S$ of $S$ and the linear operator $Q_S:\dom S\to \mathcal{E}_S$ satisfying
\begin{equation*}
    Q_Sf=Sf,\qquad f\in\dom S.
\end{equation*}
Let $Q_0$ denote the restriction of $Q_S$ to $\dom T$. Then $Q_0$ is closable and satisfies 
\begin{equation*}
    Q_0f=Tf,\qquad f\in\dom T,
\end{equation*}
and an easy calculation shows that $Q_0^*Q_0^{**}=T_F$. It follows therefore that  identity $T_F=S$ is equivalent to identity $Q_0^{**}=Q_S^{**}$.

Let us now turn to the proof of the equivalence between (i) and (ii). Assume  that $T_N\neq S$, or equivalently that $Q_0^{**}\subsetneqq Q_S^{**}$. Then there is a non-zero vector $g\in\dom Q_S^{**}=\dom S^{1/2}$ such that   $(g,Q_S^*g)$ in the graph of $Q_S^*$ is orthogonal to the graph of $Q_0^{**}$. Then 
\begin{align*}
    0&=\sip[\big]{(g,Q_S^*g)}{(f,Q_0f)}_{G(Q_S^{**})}\\
    &=\sip{g}{f}+\sipe{Q_S^*g}{Tf}\\
    &=\sip{g}{f}+\sip{g}{Tf}\\
    &=\sip{g}{f+Tf}
\end{align*}
for every  $f\in\dom T$.
Consequently, $g\in\ran(I+T)^{\perp}=\ker(I+T^*)$. Hence $\ker(I+T^*)\cap\dom S^{1/2}\neq\{0\}$ proving that (ii) implies (i). 

Let us assume now that $T_F=T_N$ and consider a vector $g\in\ker(I+T^*)\cap \dom S^{1/2}$. Then $g\in \dom T_F^{1/2}$ according to our hypothesis. By Theorem \ref{T:Friedrichs}  there exists a sequence $(g_n)$ from $\dom T$ such that 
\begin{equation*}
    g_n\to g\qquad \mbox{and}\qquad \sip{T(g_n-g_m)}{g_n-g_m}\to 0.
\end{equation*}
In particular,  $T_F^{1/2}g_n\to T_F^{1/2}g$ and thus
\begin{align*}
    -\|g\|^2&=\sip{T^*g}{g}=\limn \sip{T^*g}{g_n}\\&=\limn \sip{g}{Tg_n}=\limn \sip{T_F^{1/2}g}{T_F^{1/2}g_n}=\|T_F^{1/2}g\|^2\geq0,
\end{align*}
whence $g=0$. Consequently, (i) implies (ii).
\end{proof}
Using the preceding result we are able to establish the following generalization of Krein's uniqueness criterion \cite{Krein12} (cf. also \cite{HassiSandoviciDeSnoo}*{Theorem 4.7}):
\begin{corollary}\label{C:Krein-unique}
Let $T$ be a densely defined positive symmetric operator in the real or complex Hilbert space $\hil$. The following statements are equvalent:
\begin{enumerate}[label=\textup{(\roman*)}]
    \item $T$ has a unique positive self-adjoint extension, i.e., $T_N=T_F$,
    \item $\ker(I+T^*)\cap \dt=\{0\}$,
    \item for every non-zero vector $g\in \ker(I+T^*)$ one has
    \begin{equation*}
        \sup\set{\abs{\sip{f}{g}}^2}{f\in\dom T, \sip{Tf}{f}\leq 1}=+\infty.
    \end{equation*}
\end{enumerate}
\end{corollary}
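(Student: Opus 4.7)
The plan is to deduce the three-way equivalence from two ingredients already at hand in the excerpt: Theorem \ref{T:S=TF} (specialised to $S = T_N$) and the identification $\dom T_N^{1/2} = \mathcal{D}_*(T)$ recorded at the start of Section 3 (which comes from $T_N = J_T^{**}J_T^*$ together with $\dom J_T^* = \mathcal{D}_*(T)$).

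First I would settle (i)$\iff$(ii). Since $T_N$ is itself a positive self-adjoint extension of $T$, Theorem \ref{T:S=TF} applies directly to the choice $S := T_N$ and yields that $T_N = T_F$ if and only if $\ker(I+T^*) \cap \dom T_N^{1/2} = \{0\}$. Substituting the identity $\dom T_N^{1/2} = \mathcal{D}_*(T)$ turns this condition into exactly (ii), so no further work is required for this implication beyond citing the two earlier results.

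For (ii)$\iff$(iii) the key observation is that any $g \in \ker(I+T^*)$ satisfies $T^*g = -g$, hence for every $h \in \dom T$,
\begin{equation*}
\sip{Th}{g} = \sip{h}{T^*g} = -\sip{h}{g},
\end{equation*}
so that $\abs{\sip{Th}{g}}^2 = \abs{\sip{h}{g}}^2$. Comparing with the definition \eqref{E:D(T)} of $\mathcal{D}_*(T)$, this shows that for such a $g$ the relation $g \in \mathcal{D}_*(T)$ is equivalent to finiteness of the supremum appearing in (iii). Consequently, condition (iii), which asserts that this supremum equals $+\infty$ for every non-zero $g \in \ker(I+T^*)$, says precisely that no non-zero $g \in \ker(I+T^*)$ belongs to $\mathcal{D}_*(T)$, i.e.\ condition (ii).

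I do not anticipate any real obstacle: the heavy lifting is already done in Theorem \ref{T:S=TF} and in the factorisation leading to $T_N$. The only points that require care are citing the correct domain identification $\dom T_N^{1/2} = \mathcal{D}_*(T)$ and using the $-1$-eigenvector relation $T^*g = -g$ to exchange $\sip{Th}{g}$ for $\sip{h}{g}$ inside the defining supremum; without this swap the two suprema involved in (ii) and (iii) would look formally different.
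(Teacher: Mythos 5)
Your proposal is correct and follows essentially the same route as the paper: the equivalence (i)$\iff$(ii) is obtained by applying Theorem \ref{T:S=TF} with $S=T_N$ together with the identity $\dom T_N^{1/2}=\dt$, and (ii)$\iff$(iii) by using $T^*g=-g$ to replace $\sip{Tf}{g}$ with $\sip{f}{g}$ in the defining supremum of $\dt$. No gaps.
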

\begin{proof}
Keeping in mind the identity $\dom T_N^{1/2}=\dt$, the equivalence between (i) and (ii) follows from the preceding theorem. Furthermore,  for a vector $g\in \ker (I+T^*)$ one has 
\begin{equation*}
    \abs{\sip{f}{g}}^2=\abs{\sip{f}{-T^*g}}^2=\abs{\sip{Tf}{g}}^2, \qquad f\in\dom T,
\end{equation*}
whence we see that $g\in\dom \dt$ if and only if the supremum in (iii) is finite. This proves the equivalence between (ii) and (iii).
\end{proof}

\bigskip

From all that we have seen so far it is clear that if a positive operator $T$ is essentially self-adjoint (i.e., $T^*=T^{**}$), then on has 
\begin{equation}\label{E:TN=TF}
    T_N=T_F.
\end{equation}
Nevertheless, from the equality \eqref{E:TN=TF}, it is still not clear whether $T$ could not have further `indefinite' self-adjoint extensions.
Based on our constructions of $T_N$ and $T_F$ we are going to prove that a bounded below positive operator $T$ satisfying \eqref{E:TN=TF} must be essentially self-adjoint. As a first step, we prove the following Lemma:
\begin{lemma}\label{L:Lemma}
Let $T$ be a densely defined positive symmetric operator, then
\begin{enumerate}[label=\textup{(\alph*)}]
\item $\dom(T_N)\cap\dom (T_F^{1/2})=\dom(T_N)\cap\dom (T_F)=\dom (J_T^{**}Q_T^{**})$,
\item $G(T_N)\cap G(T_F)=G(J_T^{**}Q_T^{**})$.
\end{enumerate}
\end{lemma}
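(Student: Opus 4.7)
My plan rests on two inclusions that follow directly from the construction of $J_T$ and $Q_T$: since $Q_T$ is by definition the restriction of $J_T^*$ to $\dom T$ and $J_T^*$ is already closed, its minimal closed extension satisfies $Q_T^{**}\subseteq J_T^*$; taking adjoints of the inclusion $Q_T\subseteq J_T^*$ yields $J_T^{**}\subseteq Q_T^*$. (The second inclusion was already noted in the text preceding the definition of $T_F$.) These two facts identify $Q_T^{**}g$ with $J_T^*g$ whenever both sides are defined, and this identification is the hinge of the whole argument.

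For (a), the containment $\dom T_N\cap\dom T_F\subseteq \dom T_N\cap\dom T_F^{1/2}$ is automatic from $\dom T_F\subseteq \dom T_F^{1/2}$, so it suffices to prove the circular containments $\dom T_N\cap\dom T_F^{1/2}\subseteq \dom(J_T^{**}Q_T^{**})\subseteq \dom T_N\cap\dom T_F$. For the first, take $g$ in the left side. Then $g\in\dom Q_T^{**}=\dom T_F^{1/2}$ and $g\in\dom(J_T^{**}J_T^*)$, so in particular $J_T^*g\in\dom J_T^{**}$; since $Q_T^{**}\subseteq J_T^*$, we have $Q_T^{**}g=J_T^*g\in\dom J_T^{**}$, which places $g$ in $\dom(J_T^{**}Q_T^{**})$. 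For the second, take $g\in\dom(J_T^{**}Q_T^{**})$. Again $Q_T^{**}g=J_T^*g$, so $J_T^*g\in\dom J_T^{**}$, i.e.\ $g\in\dom T_N$. Moreover $Q_T^{**}g\in\dom J_T^{**}\subseteq\dom Q_T^*$, hence $g\in\dom(Q_T^*Q_T^{**})=\dom T_F$.

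Part (b) then drops out: on the common domain $\dom(J_T^{**}Q_T^{**})$, both $T_Ng=J_T^{**}J_T^*g$ and $T_Fg=Q_T^*Q_T^{**}g$ collapse to $J_T^{**}Q_T^{**}g$ after the substitution $J_T^*g=Q_T^{**}g$ (using the two inclusions again). Hence $T_Ng=T_Fg$ wherever both are defined, and $G(T_N)\cap G(T_F)$ coincides with the graph $G(J_T^{**}Q_T^{**})$.

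The only conceptual obstacle is bookkeeping: one must keep straight where each intermediate vector lives and recognize that $Q_T^{**}\subseteq J_T^*$ forces agreement of the two \emph{a priori} different expressions $Q_T^{**}g\in\mathcal E_T$ and $J_T^*g\in\mathcal E_T$. Once this identification is in hand, no spectral theory, polar decomposition, or density argument is needed and the lemma is a direct consequence of the definitions $T_N=J_T^{**}J_T^*$ and $T_F=Q_T^*Q_T^{**}$.
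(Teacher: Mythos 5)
Your proof is correct and follows essentially the same route as the paper: both arguments hinge on the inclusions $Q_T^{**}\subseteq J_T^*$ and $J_T^{**}\subseteq Q_T^*$ together with the identity $\dom T_F^{1/2}=\dom Q_T^{**}$, and then chase domains in the two factorizations $T_N=J_T^{**}J_T^*$, $T_F=Q_T^*Q_T^{**}$. Your write-up is in fact slightly cleaner than the paper's, which contains a typographical swap of $T_N$ and $T_F$ in its displayed identities.
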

\begin{proof}
Recall that $Q_T=J_T^*|_{\dom T}$ and that $T_N=J_T^{**}J_T^*$, while $T_F=Q_T^{*}Q_T^{**}$. Consider  $f\in\dom (J_T^{**}Q_T^{**})$, then 
\begin{equation*}
    J_T^{**}Q_T^{**}f=Q_T^{*}Q_T^{**}f=T_Nf,\qquad \mbox{and}\qquad  J_T^{**}Q_T^{**}f=J_T^{**}J_T^{*}f=T_Ff,
\end{equation*}
which imply $f\in \dom(T_N)\cap\dom (T_F)$. 
On the converse, if $f\in \dom(T_N)\cap\dom (T_F^{1/2})$, then $f\in \dom Q_T^{**}$, hence $Q_T^{**}f=J_T^{*}f$ and $f\in \dom J_T^{**}J_T^*$ imply $f\in\dom (J_T^{**}J_T^{*})$. As above, 
\begin{equation*}
    J_T^{**}Q_T^{**}f=T_Ff=T_Nf.
\end{equation*}
From these, statements (a) and (b) already follow.
\end{proof}
Let us recall that a positive operator $T:\hil\to\hil$ is called \textit{bounded from below} if it satisfies
\begin{equation}\label{E:T-boundedbelow}
    \sip{Tf}{f}\geq \varepsilon \|f\|^2,\qquad f\in\dom T
\end{equation}
for some constant $\varepsilon>0$. 
\begin{theorem}
Suppose that the densely defined positive symmetric operator $T$ is bounded from below. Then $T^{**}=J_T^{**}Q_T^{**}$. 
\end{theorem}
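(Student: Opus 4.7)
The plan is to establish $T^{**} = J_T^{**} Q_T^{**}$ by verifying both inclusions, with the lower bound on $T$ entering only in the second of them. For $T^{**} \subseteq J_T^{**} Q_T^{**}$, take $f \in \dom T^{**}$ and a sequence $(f_n) \subset \dom T$ with $f_n \to f$ and $Tf_n \to T^{**} f$ in $\hil$. By Cauchy--Schwarz in $\hil$,
\begin{equation*}
\sipe{T(f_n-f_m)}{T(f_n-f_m)} = \sip{T(f_n-f_m)}{f_n-f_m} \leq \|T(f_n-f_m)\|\,\|f_n-f_m\| \to 0,
\end{equation*}
so $(Tf_n)$ is Cauchy in $\mathcal E_T$ and therefore $f \in \dom Q_T^{**}$ with $Q_T^{**}f$ the $\mathcal E_T$-limit of $Tf_n$. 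Since $J_T(Tf_n) = Tf_n \to T^{**}f$ in $\hil$ while $Tf_n \to Q_T^{**}f$ in $\mathcal E_T$, closedness of $J_T^{**}$ places $Q_T^{**}f$ in $\dom J_T^{**}$ and yields $J_T^{**}Q_T^{**}f = T^{**}f$.

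For the reverse inclusion, take $f \in \dom(J_T^{**}Q_T^{**})$. Unpacking the two closures, one may select $(f_n), (h_k) \subset \dom T$ such that $f_n \to f$ in $\hil$, $Tf_n \to \xi := Q_T^{**}f$ in $\mathcal E_T$, while $Th_k \to \xi$ in $\mathcal E_T$ and simultaneously $Th_k \to \psi := J_T^{**}\xi$ in $\hil$. The idea is to promote $(h_k)$ to a witness sequence for $f \in \dom T^{**}$: the convergence $Th_k \to \psi$ in $\hil$ is already in hand, so only $h_k \to f$ in $\hil$ needs to be shown. From the above, $T(f_n - h_k) = Tf_n - Th_k \to 0$ in $\mathcal E_T$ as $n, k \to \infty$, i.e.
\begin{equation*}
\sip{T(f_n-h_k)}{f_n-h_k} \to 0.
\end{equation*}

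At this point the lower-bound hypothesis is decisive: the estimate $\varepsilon \|f_n - h_k\|^2 \leq \sip{T(f_n-h_k)}{f_n-h_k}$ coming from \eqref{E:T-boundedbelow} upgrades energy-norm convergence to $\|f_n - h_k\| \to 0$; combined with $f_n \to f$ this forces $h_k \to f$ in $\hil$, so $f \in \dom T^{**}$ and $T^{**}f = \psi = J_T^{**}Q_T^{**}f$. The main (and only) obstacle is precisely this transfer from the $\mathcal E_T$-topology back to the ambient Hilbert topology: without the strict positive lower bound, the two approximating sequences cannot be forced to share the same $\hil$-limit, and the equality should not be expected to persist beyond the bounded-below setting.
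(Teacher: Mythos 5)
Your proof is correct, but it takes a genuinely different route from the paper's. The paper gets the inclusion $T^{**}\subseteq J_T^{**}Q_T^{**}$ from Lemma \ref{L:Lemma}~(b) and then excludes a proper inclusion by contradiction: a nonzero pair $(g,J_T^{**}Q_T^{**}g)$ orthogonal to $G(T)$ would satisfy $T^*J_T^{**}Q_T^{**}g=-g$, and the lower bound \eqref{E:T-boundedbelow} enters by making $Q_T^{**}$ boundedly invertible, which is what justifies the factorization $T^*=Q_T^*J_T^*$ and leads to the absurdity $-\|g\|^2=\|J_T^{**}Q_T^{**}g\|^2$. You instead prove both inclusions directly at the level of approximating sequences, and you use the lower bound only to upgrade convergence in the energy norm of $\mathcal E_T$ to convergence in $\hil$, thereby forcing the two witness sequences (one with $f_n\to f$ in $\hil$ and $Tf_n\to Q_T^{**}f$ in $\mathcal E_T$, the other with $Th_k\to Q_T^{**}f$ in $\mathcal E_T$ and $Th_k\to J_T^{**}Q_T^{**}f$ in $\hil$) to share the $\hil$-limit $f$; closability of $T$, $Q_T$ and $J_T$ (all available under the standing hypotheses) then finishes both directions. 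Your argument is more elementary --- it bypasses Lemma \ref{L:Lemma}, the invertibility of $Q_T^{**}$ and the adjoint identity $T^*=Q_T^*J_T^*$ altogether --- whereas the paper's route establishes that identity as a reusable structural fact. Both proofs correctly localize the role of the lower bound to the inclusion $J_T^{**}Q_T^{**}\subseteq T^{**}$.
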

\begin{proof}
From Lemma \ref{L:Lemma} (b) it is clear that $T\subset J_T^{**}Q_T^{**}$ and also that the latter operator is closed. Hence
\begin{equation*}
    T^{**}\subset J_T^{**}Q_T^{**}.
\end{equation*}
Assume towards a contradiction that the inclusion above is proper. Then there is a non-zero vector $g\in \dom (J_T^{**}Q_T^{**})$ that  is orthogonal to the graph $G(T)$ of $T$. Then 
\begin{align*}
    0=\sip{(f,Tf)}{(g,J_T^{**}Q_T^{**}g)}=\sip fg+ \sip{Tf}{J_T^{**}Q_T^{**}g}
\end{align*}
for every $f\in\dom T$. From that we infer  $J_T^{**}Q_T^{**}g\in \dom T^*$ and 
\begin{equation}\label{E:TJQg}
    T^*J_T^{**}Q_T^{**}g=-g.
\end{equation}
Observe now that  \eqref{E:T-boundedbelow} impies that $Q_T^{**}:\dom Q_T^{**}\to\mathcal{E}_T$ is invertible with everywhere defined bounded inverse. Indeed, it satisfies 
\begin{equation*}
    \|Q_Tf\|_{\mathcal E_T}^2=\sip{Tf}{f}\geq \varepsilon \|f\|^2,\qquad f\in\dom T,
\end{equation*}
and its range space contains the dense set $\ran T\subseteq \mathcal E_T$. 
Note also that we  have  $J_TQ_T=T$ due to \eqref{E:QT*} and \eqref{E:JT*}. We claim that 
\begin{equation}\label{E:T*=Q*J*}
    T^*=Q_T^{*}J_T^*.
\end{equation}
For let $k\in \dom T^*=\dom (J_TQ_T)^*$ and $f\in\dom T$, then 
\begin{equation*}
    \sip{J_T(Tf)}{k}=\sip{(J_TQ_T)Q_T^{-1}(Tf)}{k}=\sipe{Tf}{(Q_T^{-1})^*(J_TQ_T)^*k},
\end{equation*}
whence $k\in \dom J_T^{*}$ and $J_T^*k=(Q_T^{-1})^*(J_TQ_T)^*k$. This yields \eqref{E:T*=Q*J*}, indeed.

Putting now \eqref{E:T*=Q*J*} and \eqref{E:TJQg} together we obtain 
\begin{equation*}
    -\|g\|^2=\sip{Q_T^{*}J_T^*J_T^{**}Q_T^{**}g}{g}=\|J_T^{**}Q_T^{**}g\|^2,
\end{equation*}
whence $g=0$, a contradiction.
\end{proof}
\begin{corollary}
Let $T:\hil\to\hil$ be a densely defined positive symmetric operator which is bounded from below in the sense of \eqref{E:T-boundedbelow}. Then $\dom (T_N)\cap\dom (T_F^{1/2})=\dom T^{**}$ and   
\begin{equation*}
    G(T^{**})=G(T_N)\cap G(T_F).
\end{equation*}
In particular, if $T_N=T_F$ then $T$ is essentially self-adjoint.
\end{corollary}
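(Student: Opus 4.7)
The plan is to simply chain together the preceding theorem with Lemma \ref{L:Lemma}. The theorem we just proved gives the operator identity
\begin{equation*}
    T^{**}=J_T^{**}Q_T^{**}.
\end{equation*}
Lemma \ref{L:Lemma}\,(a) tells us
\begin{equation*}
    \dom(T_N)\cap\dom(T_F^{1/2})=\dom(J_T^{**}Q_T^{**}),
\end{equation*}
so taking domains on both sides of the theorem immediately yields $\dom(T_N)\cap\dom(T_F^{1/2})=\dom T^{**}$. Likewise, Lemma \ref{L:Lemma}\,(b) gives $G(T_N)\cap G(T_F)=G(J_T^{**}Q_T^{**})=G(T^{**})$, which is the second claimed identity.

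For the essential self-adjointness assertion, assume $T_N=T_F$. Then $G(T_N)\cap G(T_F)=G(T_N)$, and combining with the graph identity just established gives $G(T^{**})=G(T_N)$, i.e.\ $T^{**}=T_N$. Since $T_N$ is self-adjoint, so is $T^{**}$, meaning $T$ is essentially self-adjoint.

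There is no real obstacle here; the work has already been carried out in the preceding theorem (whose proof exploited the bounded-below assumption through the invertibility of $Q_T^{**}$ and the factorization $T^*=Q_T^*J_T^*$) and in Lemma \ref{L:Lemma}. The corollary is therefore a bookkeeping consequence, and I would write it in just a few lines.
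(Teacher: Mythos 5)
Your argument is exactly the intended one: the paper states this corollary without proof precisely because it follows by combining the preceding theorem's identity $T^{**}=J_T^{**}Q_T^{**}$ with Lemma \ref{L:Lemma} (a) and (b), and your deduction of essential self-adjointness from $G(T^{**})=G(T_N)\cap G(T_F)=G(T_N)$ when $T_N=T_F$ is correct. Nothing is missing.
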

We remark here that the self-adjoint extensions $T_1$ and $T_2$ of $T$ are called \textit{disjoint} if they satisfy $\dom T_1\cap\dom T_2=\dom T^{**}$. Using this wording, the above corollary can also be rephrased by saying that $T_F$ and $T_N$ are disjoint extensions of $T$ (cf. e.g. \cite{HassiSandoviciDeSnoo}*{Proposition 4.3}). 

\section{Extensions of the modulus square of a linear operator}
In this section we are going to apply the foregoing results to the `modulus square' operators $T^*T$ of a given densely defined linear operarator $T:\hil\to\kil$. (Here, and everywhere below, $\hil$ and $\kil$ denote real or complex Hilbert spaces.) Clearly, $T^*T$ is positive and symmetric, but  not necessarily self-adjoint. In \cite{SebTarcs2012} it was proved that $T^*T$ always has a positive self-adjoint extension, regardless of whether $T$ is closable or not. (In the former case, $T^*T^{**}$ is apparently a positive self-adjoint extension of $T^*T$.) Using the results of Section \ref{S:section2} we can provide a brief and simple proof of this fact as follows:
\begin{theorem}
If $T:\hil\to\kil$ is a densely defined linear operator, then $T^*T$ has a positive self-adjoint extension.  
\end{theorem}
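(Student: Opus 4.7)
The plan is to verify condition (v) of Theorem \ref{T:Theorem1} for the positive symmetric operator $A := T^*T$, whose natural domain is $\dom A = \set{h\in\dom T}{Th\in\dom T^*}$. Note that no closability of $T$ will be assumed---in particular, $T^*$ need not be densely defined---but this turns out to be immaterial to the argument.

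First I would unwind the hypotheses of (v): consider any sequence $(h_n)\subseteq \dom T^*T$ with $\sip{T^*Th_n}{h_n}\to 0$ and $T^*Th_n\to f$ in $\hil$. The first condition rewrites as
\begin{equation*}
\|Th_n\|^2=\sip{T^*Th_n}{h_n}\to 0,
\end{equation*}
so $Th_n\to 0$ in $\kil$. Next, for any test vector $g\in\dom T$, the defining property of $T^*$ (applied to $Th_n\in\dom T^*$) gives
\begin{equation*}
\sip{T^*Th_n}{g}=\sip{Th_n}{Tg}\longrightarrow 0,
\end{equation*}
while the left-hand side also converges to $\sip{f}{g}$. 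Hence $\sip{f}{g}=0$ for every $g\in\dom T$, and the density of $\dom T$ in $\hil$ forces $f=0$. Condition (v) is thereby verified, and Theorem \ref{T:Theorem1} delivers the desired positive self-adjoint extension of $T^*T$.

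There is essentially no substantive obstacle here; the proof collapses to the single line $\sip{T^*Th_n}{g}=\sip{Th_n}{Tg}\to 0$, which uses only the density of $\dom T$. The merit of the argument lies in its uniformity: it applies verbatim regardless of whether $T$ is closable, whereas the familiar route exhibiting $T^*T^{**}$ as an extension presupposes closability and hence cannot be invoked in the present generality.
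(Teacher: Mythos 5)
Your proof is correct. It takes a slightly different route through Theorem \ref{T:Theorem1} than the paper does: you verify condition (v) (positive closability), whereas the paper verifies condition (ii) by observing that $\dtt$ contains the dense subspace $\dom T$ — for $g\in\dom T$ and $h\in\dom (T^*T)$ one has $\abs{\sip{T^*Th}{g}}^2=\abs{\sip{Th}{Tg}}^2\leq \|Tg\|^2\sip{T^*Th}{h}$, so $g\in\dtt$. Both arguments hinge on exactly the same identity $\sip{T^*Th}{g}=\sip{Th}{Tg}$ for $g\in\dom T$, together with the density of $\dom T$, and both are equally immune to the failure of closability of $T$; the difference is only which equivalent condition of Theorem \ref{T:Theorem1} is checked. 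If anything, the paper's version is marginally more economical (no sequences, no limit passage), and it yields the small bonus that $\dom T\subseteq \dtt=\dom T_N^{1/2}$, i.e.\ explicit information about the form domain of the Krein--von Neumann extension of $T^*T$; your version makes the underlying mechanism — that $T^*T h_n\to f$ weakly against $\dom T$ forces $f=0$ once $Th_n\to 0$ — slightly more transparent. Either way the argument is complete and correct.
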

\begin{proof}
By Theorem \ref{T:Theorem1} it suffices to prove that $\mathcal{D}_*(T^*T)$ is  dense in $\hil$. However, from the identity
\begin{equation*}
    \mathcal{D}_*(T^*T)=\set{g\in\hil}{\sup\set{\abs{\sip{T^*Th}{g}}^2}{h\in\dom (T^*T), \|Th\|^2\leq 1}<+\infty}
\end{equation*}
it is readily seen that $\dtt$ contains the dense set $\dom T$, hence $\dtt$ itself is dense. 
\end{proof}
In the next statement the issue of uniqueness of positive self-adjoint extendibility of $T^*T$ is treated. As it has been noticed in the preceding section, uniqueness may only occur when we have   $\dom (T^*T)^{\perp}=\{0\}$.  We will therefore only discuss that  non-degenerate case:
\begin{theorem}
Let $T:\hil\to\kil$ be a densely defined linear operator such that $\dom (T^*T)^{\perp}=\{0\}$ and let $T_0$ denote the restriction of $T$ to $\dom (T^*T)$. Then the following statements are equivalent:
\begin{enumerate}[label=\textup{(\roman*)}]
    \item $T^*T$ has a unique positive self-adjoint extension,
    \item $\ran(I+T^*T)^{\perp}\cap\ran T_0^*=\{0\}$.
\end{enumerate}
\end{theorem}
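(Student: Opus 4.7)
My plan is to derive this equivalence from Krein's uniqueness criterion (Corollary~\ref{C:Krein-unique}) applied to the operator $T^*T$ itself. Since $T^*T$ is positive, symmetric and, by the standing hypothesis $\dom(T^*T)^\perp=\{0\}$, densely defined, that corollary translates (i) into
\begin{equation*}
\ker(I+(T^*T)^*)\cap\dtt=\{0\}.
\end{equation*}
Because $\dom(T^*T)$ is dense we have $(I+T^*T)^*=I+(T^*T)^*$ and $\ker(I+(T^*T)^*)=\ran(I+T^*T)^{\perp}$. So everything will come down to showing that for vectors $g$ lying in $\ran(I+T^*T)^\perp$, membership in $\dtt$ coincides with membership in $\ran T_0^*$.

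To establish this key equivalence, I will fix $g\in\ker(I+(T^*T)^*)$ and exploit the identity $\langle T^*Th,g\rangle=-\langle h,g\rangle$, valid for every $h\in\dom(T^*T)=\dom T_0$. The inequality defining $\dtt$ then reads $|\langle h,g\rangle|^2\le m_g\|T_0h\|^2$ on $\dom T_0$. This bound forces the antilinear functional $T_0h\mapsto\langle h,g\rangle$ to be well defined on $\ran T_0$ (vanishing of $T_0h$ implies vanishing of $\langle h,g\rangle$) and bounded, so it extends to $\overline{\ran T_0}\subseteq\kil$ and the Riesz representation theorem yields $k\in\overline{\ran T_0}$ with $\langle T_0h,k\rangle=\langle h,g\rangle$ for all $h\in\dom T_0$. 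By the very definition of the adjoint this means $k\in\dom T_0^*$ and $T_0^*k=g$, whence $g\in\ran T_0^*$. The converse is immediate: if $g=T_0^*k$, Cauchy-Schwarz gives $|\langle h,g\rangle|^2=|\langle T_0h,k\rangle|^2\le\|k\|^2\|Th\|^2$, and combined with $\langle T^*Th,g\rangle=-\langle h,g\rangle$ this places $g$ in $\dtt$.

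The principal hurdle is largely organisational: one has to juggle the elementary but easy-to-confuse identities relating adjoints, kernels and ranges (namely $\ker A^*=\ran A^\perp$ and $(I+A)^*=I+A^*$ for densely defined $A$) in order to identify $\ker(I+(T^*T)^*)$ with $\ran(I+T^*T)^\perp$, and to be careful that the well-definedness of the functional $T_0h\mapsto\langle h,g\rangle$ really follows from the very inequality that one is trying to exploit. Once the defining equation of $\ker(I+(T^*T)^*)$ is used to linearise $\langle T^*Th,g\rangle$ to $-\langle h,g\rangle$, the Riesz step is essentially forced, and no further structural input on $T$ (closability, regularity, etc.) is required.
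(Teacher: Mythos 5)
Your proposal is correct and takes essentially the same route as the paper: both reduce the statement to Corollary \ref{C:Krein-unique} applied to $T^*T$, identify $\ker(I+(T^*T)^*)$ with $\ran(I+T^*T)^{\perp}$, and then show that for $g$ in this kernel membership in $\mathcal{D}_*(T^*T)$ is equivalent to $g\in\ran T_0^*$ via the identical Riesz-representation argument on $\ran T_0$ in one direction and Cauchy--Schwarz in the other.
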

\begin{proof}
Assume first that $T^*T$ has a unique positive self-adjoint extension. Consider a vector $k\in\dom (T^*T)$ such that $T_0^*k\in \ran (I+T^*T)^\perp$, then 
\begin{align*}
    \sup&\set{\abs{\sip{f}{T_0^*k}}^2}{f\in\dom (T^*T), \|Tf\|^2\leq 1}\\&=\sup\set{\abs{\sip{Tf}{k}}^2}{f\in\dom (T^*T), \|Tf\|^2\leq 1}\leq \|k\|^2.
\end{align*}
Hence Theorem \ref{C:Krein-unique} implies $T^*k=0$ and therefore  (i) implies (ii). 

Assume now (ii) and consider a vector $g\in\ker(I+T^*T)^*\cap\dtt$. Then here is a constant $C>0$ such that  
\begin{equation*}
    \abs{\sip{T^*Tf}{g}}^2\leq C\|Tf\|^2,\qquad f\in\dom T^*T,
\end{equation*}
from which with $(T^*T)^*g=-g$ we obtain that 
\begin{equation}\label{E:adjointrange}
    \abs{\sip{f}{g}}^2\leq C\|T_0f\|^2,\qquad f\in\dom T_0.
\end{equation}
Inequality \eqref{E:adjointrange} expresses that the linear functional $\phi:\ran T_0\to\dupK$,
\begin{equation*}
    \phi(T_0f)\coloneqq \sip{f}{g},\qquad f\in\dom T_0
\end{equation*}
is well-defined and continuous. The Riesz representation theorem yields  a vector $k\in\kil$ such that 
\begin{equation*}
    \sip{T_0f}{k}=\sip{f}{g},\qquad f\in\dom T_0
\end{equation*}
which means that $k\in\dom T_0^*$ and $T_0^*k=g$. As a consequence we see that $g\in \ran T_0^*$ and thus $g=0$ by assumption (ii). By Corollary \ref{C:Krein-unique}, $T^*T$ has a unique positive self-adjoint extension. 
\end{proof}

If $T$ is closable, then the most natural positive self-adjoint extension of $T^*T$ is  $T^*T^{**}$. In what follows we examine the relationship of that operator with the extreme extensions of $T^*T$. To do so we recall  that a vector subspace $\mathcal D$ is called a \textit{core} for a closed linear operator $S$ if $\mathcal D\subseteq \dom S$ and 
\begin{equation*}
    \overline{G(S|_{\mathcal D})}=G(S).
\end{equation*}
In other words, the graph of the restriction of $S$ to $\mathcal D$ is dense in the graph of $S$.
\begin{theorem}
Let $T$ be a densely defined and closable operator between $\hil$ and $\kil$. Then the following statements are equivalent:
\begin{enumerate}[label=\textup{(\roman*)}]
    \item $\dom (T^*T)$ is a core for $T^{**}$,
    \item $T^*T^{**}$ is identical with the Friedrichs extension of $T^*T$.
\end{enumerate}
\end{theorem}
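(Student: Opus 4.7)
My plan is to compare $\dom T_F^{1/2}$ with $\dom (T^*T^{**})^{1/2}=\dom T^{**}$ and show they coincide precisely under the core hypothesis in (i).

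First, apply Theorem \ref{T:Friedrichs} to the positive symmetric operator $T^*T$. Because $\sip{T^*Tf}{f}=\|Tf\|^2$ for $f\in\dom T^*T$, the theorem says that $\dom T_F^{1/2}$ consists exactly of those $g\in\hil$ admitting a sequence $(g_n)\subset\dom T^*T$ with $g_n\to g$ and $\|T(g_n-g_m)\|\to 0$, and that $\|T_F^{1/2}g\|^2=\lim_n\|Tg_n\|^2$ for any such sequence. Since each $g_n$ lies in $\dom T\subseteq \dom T^{**}$ and $T^{**}$ is closed, every witnessing sequence forces $g\in\dom T^{**}$ with $T^{**}g=\lim_n Tg_n$. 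Consequently one always has $\dom T_F^{1/2}\subseteq \dom T^{**}$, and $\|T_F^{1/2}g\|=\|T^{**}g\|$ on $\dom T_F^{1/2}$.

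Next, since $T^*T\subset T^*T^{**}$, the operator $T^*T^{**}$ (positive self-adjoint by von Neumann's theorem) is an extension of $T^*T$, so by the extremality in Theorem \ref{T:Friedrichs} one has $T^*T^{**}\preceq T_F$. Moreover, von Neumann's theorem also yields $\dom (T^*T^{**})^{1/2}=\dom T^{**}$ together with $\|(T^*T^{**})^{1/2}g\|=\|T^{**}g\|$. Combining with the previous step, the identity $T_F=T^*T^{**}$ reduces to the equality of domains $\dom T_F^{1/2}=\dom T^{**}$: once these coincide, the two square-root norms agree on the common domain (both equal $\|T^{**}g\|$), so the inequality $T^*T^{**}\preceq T_F$ upgrades to an equality of the associated closed quadratic forms, hence of the self-adjoint operators themselves.

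Finally, by the explicit description of $\dom T_F^{1/2}$ derived in the first step, the reverse inclusion $\dom T^{**}\subseteq \dom T_F^{1/2}$ is equivalent to the statement that every $g\in\dom T^{**}$ can be approximated by a sequence $(g_n)\subset \dom T^*T$ with $g_n\to g$ and $Tg_n\to T^{**}g$ --- which is precisely the core property in (i). This establishes (i)$\iff$(ii). The one subtle step to be careful about is the passage from ``coinciding square-root domains plus coinciding square-root norms'' to ``equal operators''; I will handle this via the standard bijection between positive self-adjoint operators and their closed quadratic forms, which is implicit in the characterization of the form order $\preceq$ recalled at the start of Section \ref{S:section2}.
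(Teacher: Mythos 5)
Your argument is correct, and it takes a genuinely different route from the paper. The paper does not compare square-root domains directly: it invokes its Theorem \ref{T:S=TF} (a positive self-adjoint extension $S$ of a positive symmetric operator equals the Friedrichs extension iff $\ker(I+T^*)\cap\dom S^{1/2}=\{0\}$, applied here with the symmetric operator $T^*T$ and $S=T^*T^{**}$), which reduces (ii) to the condition $\dom T^{**}\cap\ran(I+T^*T)^{\perp}=\{0\}$, and then identifies a nonzero vector in that intersection with a pair $(g,T^{**}g)$ orthogonal to $G(T|_{\dom T^*T})$, i.e.\ with a failure of the core property. You instead use the sequential description of $\dom T_F^{1/2}$ from Theorem \ref{T:Friedrichs}: closedness of $T^{**}$ gives $\dom (T^*T)_F^{1/2}\subseteq\dom T^{**}$ with $\|(T^*T)_F^{1/2}g\|=\|T^{**}g\|$, von Neumann's theorem gives $\dom(T^*T^{**})^{1/2}=\dom T^{**}$ with the same norm, and the maximality of $T_F$ supplies one direction of the form order; equality of the operators then reduces to the single inclusion $\dom T^{**}\subseteq\dom(T^*T)_F^{1/2}$, which is verbatim the core condition. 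Your closing concern is handled correctly: mutual domination in the order $\preceq$ (equivalently, equal square-root domains and equal square-root norms) forces equality of the operators, by antisymmetry of $\preceq$ as defined via $(I+S)^{-1}$. What each approach buys: the paper's route reuses its uniqueness machinery (Theorem \ref{T:S=TF}) and stays at the level of graphs and orthogonal complements, while yours is more quantitative and makes the identity $\|(T^*T)_F^{1/2}g\|=\|T^{**}g\|$ on $\dom(T^*T)_F^{1/2}$ explicit, which is of independent interest. One caveat shared with the paper: both arguments tacitly assume $\dom(T^*T)$ is dense so that the Friedrichs extension of $T^*T$ is defined at all; this is automatic under (i) and is presupposed by the formulation of (ii), but it is not a consequence of closability alone.
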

\begin{proof}
First note that $\dom (T^*T)$ is dense because it is the core for a densely defined closed operator. Hence the Friedrichs extension  of $T^*T$ exists. According to Theorem \ref{T:S=TF}, $T^*T^{**}=(T^*T)_F$ if and only if
\begin{equation}\label{E:6.2}
    \dom T^{**}\cap \ran(I+T^*T)^{\perp}=\{0\}.
\end{equation}
(Here we used the identities $\dom (T^*T^{**})^{1/2}=\dom T^{**}$ and $\ker(I+T^*T)^*=\ran(I+T^*T)^{\perp}$.)

Assume first that $\dom (T^*T)$ is a core for $T^{**}$. Consider a vector $g$ from the set on the left hand side of \eqref{E:6.2}. Then for every $f\in\dom T^*T$,
\begin{align*}
    0&=\sip{g}{(I+T^*T)f}=\sip gf+\sip{T^{**}g}{Tf},
\end{align*}
which means that $(g,T^{**}g)$ is orthogonal to the graph of $T|_{\dom T^{*}T}$. This proves \eqref{E:6.2}. 

Assume on the contrary that $T^*T^{**}$ is identical with the Friedrichs extension of $T^*T$, or equivalently that $T$ fulfills \eqref{E:6.2}. Take a vector $g\in\dom T^{**}$ such that $(g,T^{**}g)$ is orthogonal to $G(T|_{\dom T^{*}T})$. Then
\begin{equation*}
    0=\sip[\big]{(g,T^{**}g)}{(f,Tf)}=\sip gf+\sip{T^{**}g}{Tf}=\sip{g}{f+T^*Tf}
\end{equation*}
for every $f\in\dom (T^*T)$. By \eqref{E:6.2}, $(g,T^{**}g)=0$ which in turn means that $\dom (T^*T)$ is a core for $T^{**}$. 
\end{proof}
\begin{remark}
We notice that the `density assumption'  
\begin{equation*}
    \overline{G(T|_{\dom(T^*T)})}=G(T^{**})
\end{equation*}
already  involves the closability of $T$. In fact, for every densely defined linear operator $T$, the restricted operator $T_0\coloneqq T|_{\dom(T^*T)}$ is automatically closable. For let $(f_n)$ be a sequence from $\dom (T^*T)$ such that 
\begin{equation}
    f_n\to 0\qquad \mbox{and}\qquad T_0f_n\to k
\end{equation}
for some $k\in \kil$. Then $k\in\overline{\dom T^*}$ because $T_0f_n\in\dom T^*$. On the other hand, 
\begin{equation*}
    \sip{k}{g}=\limn\sip{T_0f_n}{g}=\limn\sip{f_n}{T^*g}=0
\end{equation*}
for every $g\in\dom T^*$, whence $k\in(\dom T^*)^\perp$. Thus $k=0$ and $T_0$ is closable, accordingly.
\end{remark}
\begin{corollary}\label{C:Gesztesy1}
Let $T$ be a densely defined linear operator such that 
\begin{equation*}
    \ran T\subseteq \dom T^*.
\end{equation*}
Then $T$ is closable and $T^*T^{**}$ is identical with the Friedrichs extension of $T^*T$.
\end{corollary}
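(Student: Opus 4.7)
The plan is to reduce this corollary to the previous theorem by exploiting the simple observation that the hypothesis $\ran T\subseteq\dom T^*$ forces the identity $\dom(T^*T)=\dom T$. Indeed, by definition $\dom(T^*T)=\set{f\in\dom T}{Tf\in\dom T^*}$, and under the hypothesis the condition $Tf\in\dom T^*$ is automatic for every $f\in\dom T$. Hence $\dom(T^*T)=\dom T$, which is in particular dense in $\hil$.

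Once this identification is in place, closability of $T$ is immediate from the remark preceding the corollary. That remark shows that for any densely defined $T$, the restriction $T_0\coloneqq T|_{\dom(T^*T)}$ is automatically closable. In our situation $T_0=T$, so $T$ itself is closable and $T^{**}$ exists.

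For the second assertion, I would then verify the hypothesis of the previous theorem, namely that $\dom(T^*T)$ is a core for $T^{**}$. But this is trivial: since $\dom(T^*T)=\dom T$ and by definition $G(T^{**})=\overline{G(T)}$, we have $\overline{G(T|_{\dom(T^*T)})}=G(T^{**})$, which is precisely the core condition. Invoking the previous theorem therefore yields $T^*T^{**}=(T^*T)_F$, and the proof is complete.

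The entire argument is a short bookkeeping reduction and I do not anticipate any genuine obstacle; the only point worth highlighting is the key identity $\dom(T^*T)=\dom T$, which is what makes both the closability and the core property fall out at once.
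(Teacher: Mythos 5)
Your argument is correct and is exactly the intended reduction: the paper states this corollary without proof, and the evident route is precisely your observation that $\ran T\subseteq\dom T^*$ forces $\dom(T^*T)=\dom T$, so that closability follows from the preceding remark and the core condition of the preceding theorem holds trivially. Nothing is missing.
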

If $S:\hil\to\hil$ is a symmetric operator, then $S^2$ is positive and symmetric. The following Corollary gives a formula for its Friedrichs extension  (cf. \cite{Gesztesy}*{Theorem 3.1}) and simultaneously corrects the false assertion of \cite{ReedSimon}*{p.181, Corollary}.
\begin{corollary}
Let $S:\hil\to\hil$ be a symmetric operator such that $\mathcal D\coloneqq \dom S^2$ is dense. Then
\begin{equation*}
    (S^2)_F=(S|_{\mathcal D})^*(S|_{\mathcal D})^{**}.
\end{equation*}
Furthermore, $(S^2)_F=S^*S^{**}$ if and only if $\dom S^2$ is core for $S^{**}$.
\end{corollary}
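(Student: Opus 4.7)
The plan is to reduce both claims to Corollary \ref{C:Gesztesy1} and the preceding theorem (on when $T^*T^{**}$ equals the Friedrichs extension of $T^*T$), applied to the restriction $T\coloneqq S|_{\mathcal D}$. Since $\mathcal D=\dom S^2$ is dense by hypothesis, $T$ is densely defined, and $T\subset S$ gives $S^*\subset T^*$.

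First I would verify the hypothesis of Corollary \ref{C:Gesztesy1} for $T$: for $f\in\dom T=\mathcal D$, the definition of $\dom S^2$ yields $Tf=Sf\in\dom S$, and symmetry of $S$ gives $\dom S\subseteq \dom S^*\subseteq \dom T^*$. Thus $\ran T\subseteq \dom T^*$, so $T$ is closable and $(T^*T)_F=T^*T^{**}$. Next I would identify $T^*T$ with $S^2$: the domain $\dom(T^*T)=\set{f\in\dom T}{Tf\in\dom T^*}$ equals all of $\mathcal D$ by the preceding observation, while for $f\in\mathcal D$ the containment $S^*\subset T^*$ together with $Sf\in\dom S\subseteq \dom S^*$ gives $T^*Tf=S^*(Sf)=S(Sf)=S^2f$. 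Hence $T^*T=S^2$, and the first identity $(S^2)_F=(S|_{\mathcal D})^*(S|_{\mathcal D})^{**}$ follows.

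For the second claim, I would compare $T^{**}$ and $S^{**}$. From $T\subset S\subset S^{**}$ we have $T^{**}\subset S^{**}$, and equality is by definition equivalent to $\mathcal D$ being a core for $S^{**}$. If this holds, then $T^*=(T^{**})^*=(S^{**})^*=S^*$, so $T^*T^{**}=S^*S^{**}$, giving $(S^2)_F=S^*S^{**}$. Conversely, if $(S^2)_F=S^*S^{**}$, i.e.\ $T^*T^{**}=S^*S^{**}$, then taking positive square roots and using the basic identity $\dom(A^*A)^{1/2}=\dom A$ for closed densely defined $A$ (recalled in the introduction) yields $\dom T^{**}=\dom S^{**}$; combined with the inclusion $T^{**}\subset S^{**}$, this forces $T^{**}=S^{**}$, i.e.\ $\mathcal D$ is a core for $S^{**}$.

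The only point requiring care is the identification $T^*T=S^2$ as operators (domains \emph{and} action), where symmetry of $S$ enters to pass from $S^*(Sf)$ to $S^2f$. Apart from this bookkeeping the argument is a direct pull-through of the previously established theorem and Corollary \ref{C:Gesztesy1}, so I anticipate no serious obstacle.
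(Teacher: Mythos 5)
Your proposal is correct and follows essentially the same route as the paper: restrict to $T\coloneqq S|_{\mathcal D}$, check $\ran T\subseteq \dom T^*$ and $T^*T=S^2$ so that Corollary \ref{C:Gesztesy1} gives the first identity, then characterize the core condition via $T^{**}=S^{**}$ using the domain identity $\dom(A^*A^{**})^{1/2}=\dom A^{**}$. You merely supply more of the bookkeeping (the verification that $T^*T=S^2$ and that $\dom T^{**}=\dom S^{**}$ forces $T^{**}=S^{**}$) that the paper leaves implicit.
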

\begin{proof}
For brevity's sake let us introduce the restricted operator $T\coloneqq S|_{\mathcal D}$. It is clear that $T^*T=S^2$ and that $\ran T\subseteq \dom T^*$, therefore
\begin{equation*}
    (S^2)_F=(T^*T)_F=T^*T^{**},
\end{equation*}
according to Corollary \ref{C:Gesztesy1}.

If $\dom S^2$ is core for $S^{**}$ then $S^{**}=T^{**}$ and $T^*=S^*$ hence $(S^2)_F=S^*S^{**}$ according to the first part of the proof. Suppose on the converse that $S^*S^{**}$ is identical with the Friedrichs extension of $S^2$, then
\begin{equation*}
    S^*S^{**}=T^*T^{**}.
\end{equation*}
In particular, $\dom S^{**}=\dom T^{**}$ which implies $S^{**}=T^{**}$ because $T\subset S$. This in turn means that $\mathcal D$ is a core for $S^{**}$.  
\end{proof}
We close this section with a characterization of essentially self-adjoint operators by means of the square of the adjoint. For similar characterizations we refer the reader to \cites{SebTarcs2016, SebTarcs2019, SebTarcs2020}. 
\begin{lemma}
Let $T$ be a densely defined and closed linear operator between $\hil$ and $\kil$ and let $\D$ be a dense linear subspace of $\dom T$. Letting $S$ be a restriction of $T$ to  $\D$,  the following assertions are equivalent:
\begin{enumerate}[label=\textup{(\roman*)}]
    \item  $\D$ is core for $T$, i.e., $\overline S=T$,
    \item $\dom S^*T\subset \dom S^{**}$
    \item $\ker (I+S^*T)=\{0\}$.
\end{enumerate}
\end{lemma}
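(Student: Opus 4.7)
The plan is to prove the equivalences cyclically: (i)$\Rightarrow$(ii)$\Rightarrow$(iii)$\Rightarrow$(i). Throughout, I would use the standing observations that $S \subset T$ with $T$ closed forces $S$ to be closable (so $S^{**}$ is its closure) and $S^{**} \subseteq T$, while passing to adjoints reverses the inclusion to give $T^* \subseteq S^*$.

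The first implication is essentially a bookkeeping step. If $\D$ is a core for $T$ then $S^{**} = T$, whence also $S^* = T^*$. Therefore
\begin{equation*}
    \dom(S^*T) = \{f \in \dom T : Tf \in \dom T^*\} = \dom(T^*T) \subseteq \dom T = \dom S^{**}.
\end{equation*}

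For (ii)$\Rightarrow$(iii) I would pick $f \in \ker(I + S^*T)$, so $f \in \dom(S^*T)$ and $S^*Tf = -f$. By (ii), $f \in \dom S^{**}$, and since $S^{**} \subseteq T$ one has $S^{**}f = Tf$. The defining pairing of $S^*$ and $S^{**}$ then gives
\begin{equation*}
    -\|f\|^2 = \langle S^*Tf, f \rangle = \langle Tf, S^{**}f \rangle = \|Tf\|^2,
\end{equation*}
which forces $f = 0$.

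The main work, and the principal obstacle, lies in (iii)$\Rightarrow$(i); the natural approach is a graph-space orthogonality argument. The graph $G(T)$ is closed in $\hil \times \kil$, and $G(S^{**})$ is a closed subspace of $G(T)$. If (i) fails, i.e.\ $G(S^{**}) \subsetneq G(T)$, then by orthogonal projection inside $\hil \times \kil$ there exists a nonzero $(g, Tg) \in G(T)$ with $(g, Tg) \perp G(S^{**})$, which unpacks to
\begin{equation*}
    \langle g, f \rangle + \langle Tg, S^{**}f \rangle = 0 \qquad \text{for every } f \in \dom S^{**}.
\end{equation*}
Rewritten as $\langle Tg, S^{**}f \rangle = \langle -g, f \rangle$, this identifies $Tg$ as an element of $\dom(S^{**})^* = \dom S^*$ with $S^*(Tg) = -g$. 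Consequently $g \in \dom(S^*T)$ and $(I + S^*T)g = 0$, so (iii) yields $g = 0$; then $Tg = 0$, contradicting $(g, Tg) \neq 0$. The only delicate point is observing that a nonzero graph element automatically has $g \neq 0$ (since $T \cdot 0 = 0$), which is what makes the contradiction clean.
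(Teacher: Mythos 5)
Your proposal is correct and follows essentially the same route as the paper: the same cyclic scheme (i)$\Rightarrow$(ii)$\Rightarrow$(iii)$\Rightarrow$(i), the same adjoint-pairing computation $\|f\|^2+\|Tf\|^2=0$ for the middle step, and the same graph-orthogonality argument for the last implication (the paper takes $(g,Tg)\in G(S)^{\perp}$ rather than $G(S^{**})^{\perp}$, which is the same condition since $G(S^{**})=\overline{G(S)}$).
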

\begin{proof}
(i)$\Rightarrow$(ii): If $S^{**}=T$, then   $\dom S^*T=\dom S^{*}S^{**}\subseteq \dom S^{**}$.

(ii)$\Rightarrow$(iii): Assume that $\dom (S^*T)\subset \dom S^{**}$ and take a vector $g\in\ker (I+S^*T)$. Then 
\begin{equation*}
    0=\sip{g+S^*Tg}{g}=\|g\|^2+\sip{S^*Tg}{g}=\|g\|^2+\sip{Tg}{S^{**}g}=\|g\|^2+\|Tg\|,
\end{equation*}
hence $g=0$.

(iii)$\Rightarrow$(i): Assume finally that $S^{**}\neq  T$. Then there exists $0\neq g\in \dom T$ such that $(g,Tg)\in G(S)^{\perp}$, whence
\begin{equation*}
    0=\sip{(f,Sf)}{(g,Tg)}^{}_{G(T)}=\sip fg+\sip{Sf}{Tg}
\end{equation*}
for every $f\in\dom S$. This implies $Tg\in \dom S^*$ and $g+S^*Tg=0$, that is, $g\in\ker(I+S^*T)$. 
\end{proof}
\begin{corollary}
For a densely defined symmetric operator $S$, the following assertions are equivalent:
\begin{enumerate}[label=\textup{(\roman*)}]
    \item $S$ is essentially self-adjoint,
    \item $\dom (S^*)^2\subset \dom S^{**}$,
    \item $\ker ((I+(S^*)^2)=\{0\}$,
    \item $(S^*)^2$ is positive.
\end{enumerate}
\end{corollary}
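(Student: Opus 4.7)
The natural plan is to deduce the chain (i)$\Leftrightarrow$(ii)$\Leftrightarrow$(iii) directly from the preceding Lemma, and then to close the loop by proving (i)$\Rightarrow$(iv) and (iv)$\Rightarrow$(iii) by short direct arguments.

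For the first three equivalences, I would apply the Lemma with the Lemma's ``$T$'' taken to be $S^{*}$ and the Lemma's ``$\D$'' taken to be $\dom S$. Two things need to be checked for the hypotheses: first, $S^{*}$ is indeed closed and densely defined (the former because adjoints of densely defined operators are always closed; the latter because $S\subseteq S^{*}$ and $\dom S$ is dense); second, $\dom S$ is a dense linear subspace of $\dom S^{*}$, which again follows from symmetry ($S\subseteq S^{*}$). With these choices, the restriction of $S^{*}$ to $\dom S$ is exactly $S$, so the Lemma's condition (i) reads $\overline{S}=S^{**}=S^{*}$, which is essential self-adjointness; its condition (ii) becomes $\dom(S^{*})^{2}\subseteq\dom S^{**}$; and its condition (iii) becomes $\ker(I+(S^{*})^{2})=\{0\}$. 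This is precisely our (i)$\Leftrightarrow$(ii)$\Leftrightarrow$(iii).

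For (i)$\Rightarrow$(iv), if $S$ is essentially self-adjoint then $S^{*}=S^{**}$ is self-adjoint, so von Neumann's theorem (stated in the introduction) applies to give that $(S^{*})^{2}=S^{*}S^{**}$ is positive and self-adjoint; in particular it is positive.

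For (iv)$\Rightarrow$(iii), suppose $(S^{*})^{2}$ is positive and let $g\in\ker(I+(S^{*})^{2})$, so that $(S^{*})^{2}g=-g$. Taking the inner product with $g$ yields
\begin{equation*}
0=\sip{(I+(S^{*})^{2})g}{g}=\|g\|^{2}+\sip{(S^{*})^{2}g}{g}\geq \|g\|^{2},
\end{equation*}
so $g=0$, which is (iii).

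The only delicate point is verifying at the outset that the Lemma really applies in the form described, i.e.\ that the restriction of $S^{*}$ to $\dom S$ coincides with $S$; but this is immediate from $S\subseteq S^{*}$. Everything else is essentially bookkeeping, so I do not anticipate a genuine obstacle.
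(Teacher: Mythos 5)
Your proof is correct and follows essentially the same route as the paper, whose entire argument is the single line ``apply the preceding Lemma with $T\coloneqq S^*$'' (with the Lemma's $\D$ implicitly $\dom S$, exactly as you set it up). In fact your write-up is slightly more complete: the paper leaves the equivalence with (iv) unstated, whereas you supply the two short arguments --- von Neumann's theorem for (i)$\Rightarrow$(iv) and the quadratic-form estimate for (iv)$\Rightarrow$(iii) --- that are needed to close the loop.
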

\begin{proof}
Apply the preceding Lemma    with $T\coloneqq S^*$.
\end{proof}
\section{Extensions of bounded symmetric operators}
In this section we are going to analyze the bounded self-adjoint extensions of symmetric operators. The main goal is to reprove Krein's fundamental result according to which a contractive symmetric operator $S$ can always extend to an everywhere defined contractive self-adjoint operator $\widetilde S$. What is more, those extensions form an operator interval $[S_m,S_M]$. In contrast to Krein's approach our starting point is a result on bounded positive extendibility (see also \cites{Sebestyen83a, SebStoch91}) which in fact is an easy consequence of Theorem 1:


\begin{theorem}\label{T:Neumannext}
Let $\hil$ be a real or complex Hilbert space and let $T$ be a positive symmetric operator defined on a linear subspace $\D$ of $\hil$. Then the following statements are eqiuivalent: 
\begin{enumerate}[label=\textup{(\roman*)}]
    \item $T$ can be extended to a  bounded positive operator to $\hil$,
    \item $\dt=\hil$,
    \item there exists a constant $\gamma>0$ such that 
    \begin{equation*}
        \|Tf\|^2\leq \gamma\cdot  \sip{Tf}{f},\qquad (\forall f\in\D).
    \end{equation*}
\end{enumerate} 
In any case, the Krein-von Neumann extension $T_N$ of $T$ is bounded with $\|T_N\|\leq \gamma $.
\end{theorem}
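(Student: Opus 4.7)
The plan is to close the cycle \textup{(i)} $\Rightarrow$ \textup{(iii)} $\Rightarrow$ \textup{(ii)} $\Rightarrow$ \textup{(i)} and then read off the norm estimate $\|T_{N}\|\leq \gamma$ from the variational formula \eqref{E:3.1}. For \textup{(i)} $\Rightarrow$ \textup{(iii)}, let $\widetilde T$ be a bounded positive extension of $T$; since $\widetilde T$ and $\|\widetilde T\|I-\widetilde T$ are commuting positive self-adjoint operators, their product is positive, yielding $\widetilde T^{2}\leq \|\widetilde T\|\,\widetilde T$. Evaluating at $f\in\D$ gives $\|Tf\|^{2}=\sip{\widetilde T^{2}f}{f}\leq \|\widetilde T\|\sip{Tf}{f}$, so (iii) holds with $\gamma=\|\widetilde T\|$.

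For \textup{(iii)} $\Rightarrow$ \textup{(ii)}, I would combine Cauchy--Schwarz with the hypothesis: for every $g\in\hil$ and $f\in\D$,
\begin{equation*}
    \abs{\sip{Tf}{g}}^{2}\leq \|Tf\|^{2}\|g\|^{2}\leq \gamma\|g\|^{2}\sip{Tf}{f},
\end{equation*}
placing $g$ in $\dt$ with constant $m_{g}=\gamma\|g\|^{2}$, and hence $\dt=\hil$. For \textup{(ii)} $\Rightarrow$ \textup{(i)}, I would invoke Theorem~\ref{T:Theorem1} together with the identification $\dt=\dom T_{N}^{1/2}$ recorded in the Remark following it: the Krein--von Neumann extension $T_{N}=J_{T}^{**}J_{T}^{*}$ exists and satisfies $\dom T_{N}^{1/2}=\hil$. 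Since $T_{N}^{1/2}$ is self-adjoint, hence closed, and everywhere defined, the closed graph theorem forces it to be bounded, so $T_{N}=(T_{N}^{1/2})^{2}$ is a bounded positive self-adjoint extension of $T$, settling (i).

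It remains to produce the sharp bound $\|T_{N}\|\leq \gamma$ assuming (iii). I would substitute the estimate from (iii) directly into \eqref{E:3.1}: for every $g\in\hil$ and every $h\in\dom T$ with $\sip{Th}{h}\leq 1$,
\begin{equation*}
    \abs{\sip{g}{Th}}^{2}\leq \|g\|^{2}\|Th\|^{2}\leq \gamma\|g\|^{2}\sip{Th}{h}\leq \gamma\|g\|^{2},
\end{equation*}
so taking the supremum over such $h$ yields $\|T_{N}^{1/2}g\|^{2}\leq \gamma\|g\|^{2}$, whence $\|T_{N}\|=\|T_{N}^{1/2}\|^{2}\leq \gamma$. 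The whole argument is essentially an assembly of results already developed; the only step that requires attention is to remember that the closed graph theorem must be applied to the square root $T_{N}^{1/2}$ (whose domain equals $\dt$) rather than to $T_{N}$ itself, after which \eqref{E:3.1} converts the qualitative boundedness into the quantitative inequality $\|T_{N}\|\leq \gamma$.
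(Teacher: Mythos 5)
Your proof is correct, and it assembles essentially the same ingredients as the paper (Cauchy--Schwarz, the identification $\dt=\dom J_T^*=\dom T_N^{1/2}$, the closed graph theorem, and the Krein--von Neumann construction from Theorem~\ref{T:Theorem1}), but the cycle runs in the opposite orientation and two steps differ in substance. The paper proves \textup{(i)}$\Rightarrow$\textup{(ii)} by applying the Cauchy--Schwarz inequality to the form of the bounded extension, $\abs{\sip{Tf}{g}}^2\leq\sip{Tf}{f}\sip{\widetilde Tg}{g}$, and gets \textup{(ii)}$\Rightarrow$\textup{(iii)} by first producing the bounded operator $T_N=J_T^{**}J_T^*$ (closed graph theorem applied to $J_T^*$, which is closed and everywhere defined) and then using $\|T_Nf\|^2\leq\|T_N\|\sip{T_Nf}{f}$; your \textup{(i)}$\Rightarrow$\textup{(iii)} instead goes directly through the operator inequality $\widetilde T^2\leq\|\widetilde T\|\widetilde T$, and your closed graph argument is applied to $T_N^{1/2}$ rather than to $J_T^*$ --- both are legitimate, and the two are equivalent since $T_N^{1/2}$ and $J_T^*$ are metrically identical. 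The one place where your route buys something real is the final norm estimate: the paper only exhibits $\gamma=\|T_N\|$ as an admissible constant in \textup{(iii)}, which by itself does not show that an \emph{arbitrary} $\gamma$ satisfying \textup{(iii)} dominates $\|T_N\|$; your substitution of \textup{(iii)} into the variational formula \eqref{E:3.1}, giving $\|T_N^{1/2}g\|^2\leq\gamma\|g\|^2$ and hence $\|T_N\|=\|T_N^{1/2}\|^2\leq\gamma$, proves the stated inequality for every admissible $\gamma$ and is the cleaner way to close that gap.
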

\begin{proof}
(i)$\Rightarrow$(ii): Let $\widetilde T\in\bh$ be a bounded positive extension of $T$. Then 
\begin{equation*}
    \abs{\sip{Tf}{g}}^2=\abs{\sip{\widetilde Tf}{g}}^2\leq \sip{\widetilde Tf}{f}\sip{\widetilde Tg}{g}=\sip{Tf}{f}\sip{\widetilde Tg}{g}
\end{equation*}
for $f\in\D$ and $g\in\hil$. Hence $\D_*(T)=\hil$.

(ii)$\Rightarrow$(iii): Taking into account of identity $\D_*(T)=\dom J_T^*$, assumption (ii) means that $J_T^*$ is a bounded operator by the closed graph theorem. Consequently, $T_N=J_T^{**}J_T^*$ is a bounded positive extension of $T$ and thus 
\begin{equation*}
    \|Tf\|^2=\|T_Nf\|^2\leq \|T_N\| \sip{T_Nf}{f}=\|T_N\| \sip{Tf}{f},\qquad f\in \D,
\end{equation*}
so (iii) holds true with $\gamma =\|T_N\|$.

(iii)$\Rightarrow$(i): For every $f\in\D$ and $g\in \hil$ one has 
\begin{equation*}
    \abs{\sip{Tf}{g}}^2\leq \gamma \cdot \|g\|^2\cdot \sip{Tf}{f},
\end{equation*}
because of (iii). Consequently, $\D_*(T)=\hil$.
\end{proof}
The above theorem is also of key importance for the introduction of the concept of the 'shorted operator'  (see eg. \cite{PekarevSmulian}): if $T\in\bh$ is a positive operator and $\D$ is a (closed) linear subspace of $\hil$ then  \begin{equation*}
    T-(T|_\D)_N
\end{equation*}
is called shortening of $T$  to the subspace $\D$. (Here  $(T|_{\D})_N$ denotes the Krein-von Neumann extension of $T|_{\D}$.) Its characteristic properties are described in the following lemma: 
\begin{lemma}
Let $\hil$ be a real or complex Hilbert space and let $S,T\in\bh$ be positive operators such that $S\leq T$. If $\D\subseteq \hil$ is any linear subspace such that $\ran S\subseteq \D^\perp$, then 
\begin{equation*}
    S\leq T-(T|_{\D})_N.
\end{equation*}
\end{lemma}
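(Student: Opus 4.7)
The plan is to reduce the claim to the extremal property of the Krein--von Neumann extension (Theorem \ref{T:KreinNeumann}) applied to the restriction $T|_{\D}$. The first step I would carry out is translating the hypothesis $\ran S\subseteq \D^{\perp}$ into the more useful statement $\D\subseteq \ker S$. Indeed, for every $x\in\hil$ and $d\in\D$ one has $\sip{x}{Sd}=\sip{Sx}{d}=0$ because $S$ is self-adjoint, and taking $x=Sd$ yields $\|Sd\|=0$.

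With this in hand, the bounded positive operator $R\coloneqq T-S$ satisfies $R|_{\D}=T|_{\D}-S|_{\D}=T|_{\D}$, so $R$ is a bounded (everywhere defined) positive self-adjoint extension of $T|_{\D}$. In particular, $T|_{\D}$ admits a bounded positive extension, so by Theorem \ref{T:Neumannext}, $(T|_{\D})_N$ is itself bounded (and defined on all of $\hil$). Now by the minimality statement of Theorem \ref{T:KreinNeumann}, $(T|_{\D})_N\preceq R$ in the form order.

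The last step is to convert the form-order inequality into the usual operator inequality. Since both operators are bounded positive self-adjoint on all of $\hil$, the square roots are everywhere defined, and the characterization of $\preceq$ recalled after \eqref{E:formorder} reduces to $\|(T|_{\D})_N^{1/2}g\|^2\leq \|R^{1/2}g\|^2$ for every $g\in\hil$, i.e.\ $(T|_{\D})_N\leq R = T-S$. Rearranging gives $S\leq T-(T|_{\D})_N$, as claimed.

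I do not expect a serious obstacle here: the content is entirely a matter of reading off the right interpretations. The only point that needs a one-line verification is the equivalence of the form order and the ordinary operator order for bounded, everywhere-defined positive operators, which is immediate from the characterization of $\preceq$ stated just after \eqref{E:formorder}.
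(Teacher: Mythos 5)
Your proposal is correct and follows essentially the same route as the paper: deduce $\D\subseteq\ker S$ from $\ran S\subseteq\D^\perp$, observe that $T-S$ is then a bounded positive extension of $T|_{\D}$, and invoke the minimality of the Krein--von Neumann extension. The paper's proof is just a terser version of the same argument, leaving the form-order-versus-operator-order remark implicit.
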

\begin{proof}
According to the assumption posed on $S$ we have $\D\subset\ker S$, hence $T|_{\D}\subset T-S$. Consequently, $T-S\geq (T|_{\D})_N$. 
\end{proof}
As a consequence of the construction of the Krein-von Neumann extension one readily obtains a formula for the quadratic form of the shorted operator (see eg. \cites{Anderson1,Anderson2}):
\begin{lemma}\label{E:Lemma7.2}
Let $T:\D\to\hil$ be a linear operator possessing a bounded positive extension to $\hil$. Then every  bounded positive extension $S$ of $T$ satisfies 
\begin{equation}
    \|(S-T_N)^{1/2}h\|^2=\inf_{f\in \D} \sip{S(f+h)}{f+h},\qquad h\in\hil.
\end{equation}
\end{lemma}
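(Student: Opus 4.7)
The plan is to decompose $\sip{S(f+h)}{f+h}$ as the sum of an $f$-independent part and a nonnegative remainder, and then to show that the remainder can be driven to zero. I would start from the observation that $T_N$, being the smallest positive self-adjoint extension of $T$ by Theorem~\ref{T:KreinNeumann}, satisfies $T_N \preceq S$; since both operators are bounded, this just means $R \coloneqq S - T_N \geq 0$ as a bounded positive self-adjoint operator on $\hil$. The crucial point is that $R$ annihilates $\D$: indeed $Sf = Tf = T_N f$ for $f\in\D$. Combined with self-adjointness of $R$ this yields $\sip{R(f+h)}{f+h} = \sip{Rh}{h}$, and adding $\sip{T_N(f+h)}{f+h}$ gives the key identity
\begin{equation*}
\sip{S(f+h)}{f+h} = \|(S-T_N)^{1/2}h\|^2 + \sip{T_N(f+h)}{f+h}, \qquad f\in\D,\; h\in\hil.
\end{equation*}

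Positivity of $T_N$ then delivers the lower bound $\inf_{f\in\D}\sip{S(f+h)}{f+h}\geq\|(S-T_N)^{1/2}h\|^2$ for free, so the remaining task is to prove $\inf_{f\in\D}\sip{T_N(f+h)}{f+h} = 0$ for every $h\in\hil$. To do so I would exploit that $\D$ is a linear subspace by replacing $f$ with $\lambda f$ ($\lambda\in\dupK$) and expanding
\begin{equation*}
\sip{T_N(\lambda f + h)}{\lambda f + h} = |\lambda|^2\sip{Tf}{f} + 2\operatorname{Re}\bigl[\lambda\sip{Tf}{h}\bigr] + \sip{T_N h}{h},
\end{equation*}
a quadratic in $\lambda$ whose minimum (for $\sip{Tf}{f}>0$) equals $\sip{T_N h}{h} - |\sip{Tf}{h}|^2/\sip{Tf}{f}$. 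Taking the supremum over $f\in\D$ and matching the resulting sup against formula \eqref{E:3.1} would identify it with $\|T_N^{1/2}h\|^2 = \sip{T_N h}{h}$, producing the upper bound $0$.

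The main obstacle is precisely this last identification: there is no direct reason why $\inf_{f\in\D}\sip{T_N(f+h)}{f+h}$ should vanish, but the scaling reduction $f \mapsto \lambda f$ converts the problem into a supremum that is exactly the variational description \eqref{E:3.1} of the quadratic form of $T_N$. Once this recognition is made, combining the two inequalities yields the announced formula.
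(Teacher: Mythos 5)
Your proof is correct, but it takes a different route from the paper's. The paper works inside the energy space $\mathcal E_T$: it starts from $\inf_{f\in\D}\|J_T^*h+Tf\|_T^2=0$ (density of $\ran T$ in $\mathcal E_T$), expands this square, and substitutes $Tf=Sf$ to land directly on the claimed formula. You instead stay entirely in $\hil$: you split $S=(S-T_N)+T_N$, observe that $S-T_N$ kills $\D$ so that $\sip{S(f+h)}{f+h}=\|(S-T_N)^{1/2}h\|^2+\sip{T_N(f+h)}{f+h}$, and then reduce the remaining claim $\inf_{f\in\D}\sip{T_N(f+h)}{f+h}=0$ to formula \eqref{E:3.1} via the scaling $f\mapsto\lambda f$ and the normalization $\sup_f\abs{\sip{h}{Tf}}^2/\sip{Tf}{f}=\sup\set{\abs{\sip{h}{Tf}}^2}{\sip{Tf}{f}\le 1}$ (with the degenerate case $\sip{Tf}{f}=0$ forcing $Tf=0$ and hence contributing nothing). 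The two arguments are essentially dual, since $\sip{T_N(f+h)}{f+h}=\|J_T^*h+Tf\|_T^2$: what you prove via \eqref{E:3.1} is exactly the paper's opening identity, derived without re-entering the energy space. Your version is self-contained modulo \eqref{E:3.1} and makes the geometric content (the $(S-T_N)$-part is the $f$-independent residue, the $T_N$-part is the minimized quadratic) more transparent; the paper's is shorter once the construction of $T_N=J_T^{**}J_T^*$ is in hand. Your identification $T_N\preceq S\iff S-T_N\ge 0$ for bounded operators is also legitimate, and is in fact not even needed for the final formula, since positivity of $S-T_N$ on $\hil$ follows from your two inequalities combined.
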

\begin{proof}
In accordance with the construction of $T_N$ we have
\begin{align*}
    0=\inf_{f\in\D} \|J_T^*h+Tf\|^2_T&=\inf_{f\in\D}\{\sip{T_Nh}{h}+\sip{h}{Tf}+\sip{Tf}{h}+\sip{Tf}{f}\}\\
    &=\sip{T_Nh}{h}+\inf_{f\in\D}\{\sip{h}{Sf}+\sip{Sf}{h}+\sip{Sf}{f}\},
\end{align*}
hence 
\begin{equation*}
    -\sip{T_Nh}{h}=\inf_{f\in\D}\{\sip{Sh}{f}+\sip{Sf}{h}+\sip{Sf}{f}\}.
\end{equation*}
Thus 
\begin{align*}
    \|(S-T_N)^{1/2}h\|^2&=\sip{Sh}{h}-\sip{T_Nh}{h}\\
                    &=\sip{Sh}{h} +   \inf_{f\in\D}\{\sip{Sh}{f}+\sip{Sf}{h}+\sip{Sf}{f}\}\\
                    &=\inf_{f\in \D} \sip{S(f+h)}{f+h},
\end{align*}
as  claimed.
\end{proof}

In the next theorem we provide an explicit formula for the range space of the square root of the shorted operator. In addition of being a sharpening of Krein's formula (2.1) this result also plays a fundamental role in the proof of Krein's uniqueness criterion (Theorem 7.7):
\begin{theorem}\label{T:ranS-TN}
Let $T:\D\to\hil$ be a linear operator possessing a bounded positive extension to $\hil$. For every bounded positive extension $S$ of $T$ we have
\begin{equation}
    \ran(S-T_N)^{1/2}=\ran S^{1/2}\cap \D^\perp.
\end{equation}
\end{theorem}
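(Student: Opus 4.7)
The plan is to prove the range identity by establishing the two inclusions separately. The inclusion $\ran(S-T_N)^{1/2} \subseteq \ran S^{1/2} \cap \D^\perp$ is essentially bookkeeping: by Theorem \ref{T:KreinNeumann}, $T_N$ is the smallest positive self-adjoint extension of $T$, hence $T_N \preceq S$ and, since both operators are bounded, $0 \leq S-T_N \leq S$. A standard consequence of Douglas' factorization theorem then yields $\ran(S-T_N)^{1/2} \subseteq \ran S^{1/2}$. For the orthogonality, observe that every $f \in \D = \dom T$ satisfies $Sf = Tf = T_N f$, so $\D \subseteq \ker(S-T_N) = \ker(S-T_N)^{1/2}$. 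Since $(S-T_N)^{1/2}$ is bounded and self-adjoint,
\begin{equation*}
\ran(S-T_N)^{1/2} \subseteq \overline{\ran(S-T_N)^{1/2}} = \ker\bigl((S-T_N)^{1/2}\bigr)^\perp \subseteq \D^\perp.
\end{equation*}

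For the substantial inclusion $\ran S^{1/2} \cap \D^\perp \subseteq \ran(S-T_N)^{1/2}$, I would pick $h = S^{1/2}k$ with $h\in\D^\perp$ and invoke Douglas' theorem in its functional form: it suffices to exhibit a constant $c\geq 0$ such that
\begin{equation*}
\abs{\sip{h}{x}}^2 \leq c\,\|(S-T_N)^{1/2}x\|^2, \qquad x\in\hil.
\end{equation*}
The central trick is to exploit the orthogonality $h\perp \D$ in order to shift $x$ by an arbitrary $f\in\D$ without changing the left-hand side, and then hand off the right-hand side to Lemma \ref{E:Lemma7.2}. Indeed, since $\sip{h}{x} = \sip{h}{x+f}$, a single application of Cauchy--Schwarz gives
\begin{equation*}
\abs{\sip{h}{x}}^2 = \abs{\sip{S^{1/2}(x+f)}{k}}^2 \leq \|k\|^2 \sip{S(x+f)}{x+f},
\end{equation*}
and taking the infimum over $f \in \D$ on the right together with the formula of Lemma \ref{E:Lemma7.2} yields the bound with $c = \|k\|^2$.

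The only place where something nontrivial happens is the recognition that after the shift $x \mapsto x+f$, the infimum over $\D$ is precisely what Lemma \ref{E:Lemma7.2} identifies with $\|(S-T_N)^{1/2}x\|^2$; once this bridge is in place, the rest is routine order theory and a one-line Cauchy--Schwarz estimate. So I do not anticipate a serious obstacle beyond correctly orchestrating Douglas' theorem together with the variational description of the shorted operator.
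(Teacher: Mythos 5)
Your proposal is correct and follows essentially the same route as the paper: the easy inclusion via Douglas' theorem together with $\D\subseteq\ker(S-T_N)$, and the reverse inclusion by shifting $x\mapsto x+f$ using $h\perp\D$, applying Cauchy--Schwarz, and identifying the infimum over $\D$ with $\|(S-T_N)^{1/2}x\|^2$ via Lemma \ref{E:Lemma7.2} before invoking the range criterion $\abs{\sip{h}{x}}^2\leq c\,\|Ax\|^2\Rightarrow h\in\ran A^*$. No gaps; this is the paper's argument.
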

\begin{proof}
For brevity's sake let us introduce notation $Z\coloneqq (S-T_N)^{1/2}$. 

It is clear that 
\begin{equation*}
    \|Zh\|^2\leq \|S^{1/2}h\|^2
\end{equation*}
for all $h\in\hil$, hence $\ran Z\subseteq \ran S^{1/2}$ follows by the Douglas factorization theorem \cite{Douglas}. On the otherhand, \begin{equation*}
    (S-T_N)f=0,\qquad f\in\D,
\end{equation*}
because $S$ and $T_N$ agree on $\D$. Consequently, $\D\subseteq \ker Z$ which in turn implies $\ran Z\subseteq \D^\perp$. This proves inclusion
\begin{equation*}
    \ran Z\subseteq \ran S^{1/2}\cap \D^\perp.
\end{equation*}
To show the opposite subspace inclusion take any $g\in\ran S^{1/2}\cap\D^\perp$, $g=S^{1/2}k$. For every $f\in\D$ and $h\in\hil$,
\begin{align*}
    \abs{\sip gh}^2&=\abs{\sip{g}{f+h}}^2=\abs{\sip{S^{1/2}k}{f+h}}^2\leq \|k\|^2\cdot\sip{S(f+h)}{f+h},
\end{align*}
so that 
\begin{equation*}
    \abs{\sip gh}^2\leq \|k\|^2\cdot \inf_{f\in\D}\sip{S(f+h)}{f+h}=\|k\|^2 \cdot \|Z^{1/2}h\|^2,
\end{equation*}
according to Lemma \ref{E:Lemma7.2}. Then by \label{Sebesty83}{asdf} we get $g\in \ran (Z^{1/2})^*=\ran Z^{1/2}$. 
\end{proof}

At this point we note that the operation '$S\mapsto S_N$' is not monotone, that  is, 
\begin{equation}\label{E:Neumannext-monoton}
    S|_{\D}\leq T|_{\D} \quad \nRightarrow \quad (S|_{\D})_N\leq (T|_{\D})_N,
\end{equation}
as the following counter example demonstrates. Consider the  Hilbert space $\hil\coloneqq \dupC\times \dupC$, and the one-dimensional subspace $\D\coloneqq \dupC\times\{0\}$ in it. Let us introduce the positive operators $S,T:\D\to \hil$ by letting
\begin{equation*}
    S(z,0)\coloneqq (z,0),\qquad T(z,0)\coloneqq (z,z), \qquad (z\in\dupC).
\end{equation*}
Clearly, both $S$ and $T$ can be extended to positive operators onto $\dupC^2$, and also $S\leq T$ on $\D$. Hence $T-S$ is a (bounded) positive symmetric operator whose self-adjoint extensions are of the form
\begin{equation*}
    \opmatrix{0}{1}{1}{t},\qquad t\in\dupR.
\end{equation*}

The counterexample above thus shows that the Neumann extension does not preserve the partial ordering. However, the condition $S\leq T$ together with an additional range space inclusion already implies the inequality $S_N\leq T_N$:
\begin{proposition}
Let $S,T:\D\to\hil$ be linear operators possessing bounded positive extensions. Then the following two statements are equivalent:
\begin{enumerate}[label=\textup{(\roman*)}]
    \item $S_N\leq T_N$,
    \item \begin{enumerate}[label=\textup{(\alph*)}]
        \item $\sip{Sf}{f}\leq \sip{Tf}{f}$, $(\forall f\in\D)$,
        \item $\ran S_N^{1/2}\subseteq \ran T_N^{1/2}$.
    \end{enumerate}
\end{enumerate}
\end{proposition}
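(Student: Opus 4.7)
The forward direction (i) $\Rightarrow$ (ii) should be essentially routine. Condition (a) is immediate from $S_N|_{\mathcal{D}} = S$ and $T_N|_{\mathcal{D}} = T$, since then $\langle Sf, f\rangle = \langle S_N f, f\rangle \leq \langle T_N f, f\rangle = \langle Tf, f\rangle$ for $f \in \mathcal{D}$. Condition (b) is the range-inclusion form of the operator inequality $S_N \leq T_N$ and follows by Douglas' factorization theorem applied to the self-adjoint square roots $S_N^{1/2}$ and $T_N^{1/2}$.

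For the substantive direction (ii) $\Rightarrow$ (i), my plan is as follows. First, invoke Douglas' theorem on the range inclusion (b) to produce a bounded operator $C$ on $\mathcal{H}$ satisfying $S_N^{1/2} = T_N^{1/2} C$. Since both square roots are self-adjoint, taking adjoints yields the companion identity $S_N^{1/2} = C^* T_N^{1/2}$. Condition (a) then translates into
\[
\|C^* T_N^{1/2} f\|^2 = \|S_N^{1/2} f\|^2 = \langle Sf, f\rangle \leq \langle Tf, f\rangle = \|T_N^{1/2} f\|^2
\]
for every $f \in \mathcal{D}$, so $C^*$ acts as a contraction on the subset $T_N^{1/2}(\mathcal{D}) \subseteq \mathcal{H}$. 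If this contraction estimate can be pushed to all of $\ran T_N^{1/2}$, then $\|S_N^{1/2} g\|^2 = \|C^* T_N^{1/2} g\|^2 \leq \|T_N^{1/2} g\|^2$ for every $g \in \mathcal{H}$, which is exactly $S_N \leq T_N$.

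The main obstacle is precisely this propagation, and the key tool is Theorem \ref{T:ranS-TN}. My plan is to apply it with the choice $S = T_N$ (noting that $T_N$ is itself a bounded positive extension of $T$), which yields the crucial identity $\{0\} = \ran(T_N - T_N)^{1/2} = \ran T_N^{1/2} \cap \mathcal{D}^\perp$. A short orthogonal-complement computation then shows
\[
(T_N^{1/2}(\mathcal{D}))^\perp = \{g \in \mathcal{H} : T_N^{1/2} g \in \mathcal{D}^\perp\} = \ker T_N^{1/2} = (\ran T_N^{1/2})^\perp,
\]
so $T_N^{1/2}(\mathcal{D})$ is $\mathcal{H}$-norm dense in $\overline{\ran T_N^{1/2}}$. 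Boundedness of $C^*$ now propagates the contraction inequality to the full closure, and in particular to every vector of the form $T_N^{1/2} g$ with $g \in \mathcal{H}$, completing the argument.

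The division of labor is clean: (b) buys the factor $C$, (a) delivers contractivity on the initial subset $T_N^{1/2}(\mathcal{D})$, and the distinctive range property of the Krein-von Neumann extension encoded in Theorem \ref{T:ranS-TN} bridges these two ingredients. The counterexample presented just before the proposition underscores the indispensable role of (b): there (a) holds with equality but the range inclusion fails, and correspondingly $T_N^{1/2}(\mathcal{D})$ fails to approximate $\ran T_N^{1/2}$, so the contraction estimate cannot propagate.
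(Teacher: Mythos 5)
Your argument is correct, and it reaches the conclusion by a genuinely different route from the paper. The paper also reduces (ii)$\Rightarrow$(i) to Douglas' factorization theorem, but it applies it to the energy-space embeddings: range inclusion (b) is first translated into $\ran J_S^{**}\subseteq \ran J_T^{**}$, Douglas then produces a bounded $D:\mathcal E_T\to\mathcal E_S$ with $J_S^{**}=J_T^{**}D$, and condition (a) gives $\|D^*\|\le 1$ by a direct computation of the supremum of $\sip{Sf}{f}$ over $\{f\in\D:\sip{Tf}{f}\le1\}$ --- the density needed there ($\ran T$ dense in $\mathcal E_T$) is built into the construction of the energy space, so no extra lemma is required; the conclusion $S_N=J_T^{**}DD^*J_T^*\le J_T^{**}J_T^*=T_N$ is then immediate. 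You instead factor the square roots inside $\hil$, writing $S_N^{1/2}=T_N^{1/2}C=C^*T_N^{1/2}$, and the price is that contractivity of $C^*$ is only visible a priori on $T_N^{1/2}(\D)$; you pay that price with Theorem \ref{T:ranS-TN} applied to $S=T_N$, which gives $\ran T_N^{1/2}\cap\D^\perp=\{0\}$ and hence the density of $T_N^{1/2}(\D)$ in $\overline{\ran T_N^{1/2}}$. That step is sound (and could even be obtained more cheaply, since $\xi\in\dom J_T^{**}$ with $J_T^{**}\xi\perp\D$ forces $\xi\perp\ran T$ in $\mathcal E_T$, whence $\xi=0$). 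The paper's version is slightly more economical because it never needs the shorted-operator range theorem; yours stays entirely inside $\hil$ and makes the role of the two hypotheses (a) and (b) very transparent.

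One peripheral correction: in your closing remark about the counterexample you assert that there $T_N^{1/2}(\D)$ fails to be dense in $\ran T_N^{1/2}$. That is not so --- the density of $T_N^{1/2}(\D)$ in $\overline{\ran T_N^{1/2}}$ is unconditional (it uses only that $T_N$ is the Krein--von Neumann extension of $T$, nothing about $S$), and in the paper's example one in fact has $T_N^{1/2}(\D)=\ran T_N^{1/2}=\operatorname{span}\{(1,1)\}$. What breaks down when (b) fails is the very existence of the Douglas factor $C$, not the propagation step. This does not affect the validity of your proof.
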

\begin{proof}
It is clear that (i) implies (ii). For the converse, observe first that (ii) (b) is equivalent to range inclusion
\begin{equation*}
    \ran J_S^{**}\subseteq \ran J_T^{**}.
\end{equation*}
By the Douglas factorization theorem, there exists a bounded linear operator $D:\ehil_T\to\ehil_S$ such that $J_S^{**}=J_T^{**}D$. Equivalently, we have $J_S^*=D^*J_T^*$  
\begin{align*}
    \|D^*\|^2&=\sup_{f\in\D, \sip{Tf}{f}\leq 1} \sip{D^*(Tf)}{D^*(Tf)}_{\mathcal{E}_S}\\
    &=\sup_{f\in\D, \sip{Tf}{f}\leq 1} \sip{D^*J_T^*f}{D^*J_T^*f}_{\mathcal{E}_S}\\
    &=\sup_{f\in\D, \sip{Tf}{f}\leq 1} \sip{J_S^*f}{J_S^*f)}_{\mathcal{E}_S}\\
    &=\sup_{f\in\D, \sip{Tf}{f}\leq 1} \sip{Sf}{f)} \leq 1,
\end{align*}
because of (i) (b). Consequently we get 
\begin{equation*}
    S_N=J_T^{**}DD^*J_T^*\leq J_T^{**}J_T^*=T_N,
\end{equation*}
hence (ii) implies (i).
\end{proof}

\medskip

In the remaining of the section we are going to consider bounded self-adjoint extensions of symmetric operators. First of all we provide a short and simple proof of Krein's fundamental theorem   stating that a bounded symmetric operator always has a norm-preserving (bounded) self-adjoint extension.
\begin{theorem}\label{T:SmSM}
Let $\D\subseteq \hil$ be a linear subspace of $\hil$ and let $S:\D\to\hil$, $\|S\|=1$ be a symmetric operator. Then 
\begin{equation}\label{E:SmandSM}
    S_{m}\coloneqq (I+S)_N-I\qquad\mbox{and}\qquad S_M\coloneqq I-(I-S)_N
\end{equation}
are self-adjoint extensions of $S$ having norm $1$. Moreover, 
\begin{equation*}
    [S_m,S_M]=\set{\widetilde{S}\in\bh}{\widetilde{S}^*=\widetilde{S},\|\widetilde{S}\|=1, S\subset \widetilde{S}}.
\end{equation*}
\end{theorem}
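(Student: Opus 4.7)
The plan is to reduce everything to Theorem \ref{T:Neumannext} applied to the positive symmetric operators $I\pm S$, combined with the minimality of the Krein-von Neumann extension and Corollary \ref{C:Cor10}. First, since $S$ is symmetric on $\D$ with $\|S\|=1$, we have $\|Sf\|\leq\|f\|$ and $|\sip{Sf}{f}|\leq\|f\|^2$ for $f\in\D$, whence $I\pm S$ are positive symmetric on $\D$. The key computation
\begin{equation*}
\|(I+S)f\|^2=\|f\|^2+2\sip{Sf}{f}+\|Sf\|^2\leq 2\|f\|^2+2\sip{Sf}{f}=2\sip{(I+S)f}{f},
\end{equation*}
and its analogue for $I-S$, verifies condition (iii) of Theorem \ref{T:Neumannext} with $\gamma=2$. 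Therefore the Krein-von Neumann extensions $(I\pm S)_N$ are bounded positive self-adjoint operators on $\hil$ with $0\leq (I\pm S)_N\leq 2I$.

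Setting $S_m\coloneqq(I+S)_N-I$ and $S_M\coloneqq I-(I-S)_N$, these are bounded self-adjoint and satisfy $-I\leq S_m,\,S_M\leq I$. For $f\in\D$ we have $(I\pm S)_N f=(I\pm S)f$, so $S_m f=S_M f=Sf$ and both extend $S$; the bound $\|S_m\|\leq 1$ together with $\|S_m\|\geq\|S\|=1$ yields $\|S_m\|=1$, and similarly $\|S_M\|=1$. For the easy inclusion of the interval identity, let $\widetilde S\in\bh$ be any self-adjoint extension of $S$ with $\|\widetilde S\|=1$. Then $I\pm\widetilde S\geq 0$ are bounded positive self-adjoint extensions of $I\pm S$, and the minimality of the Krein-von Neumann extension (Theorem \ref{T:KreinNeumann}) gives $(I+S)_N\preceq I+\widetilde S$ and $(I-S)_N\preceq I-\widetilde S$, i.e., $S_m\leq\widetilde S\leq S_M$.

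The main obstacle is the reverse inclusion: recovering the extension property $S\subset\widetilde S$ from the mere inequalities $S_m\leq\widetilde S\leq S_M$. This is exactly the situation addressed by Corollary \ref{C:Cor10}. I would apply that corollary with the positively closable operator $T\coloneqq I+S$, with $R\coloneqq I+S_M$ (a positive self-adjoint operator with $T\subset R$ since $S\subset S_M$), and with the corollary's role of ``$S$'' played by $I+\widetilde S$. The hypothesis $T_N=I+S_m\preceq I+\widetilde S\preceq I+S_M=R$ reduces to $S_m\leq\widetilde S\leq S_M$, and the conclusion $T\subset I+\widetilde S$ is precisely $S\subset\widetilde S$. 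The sandwich $-I\leq S_m\leq\widetilde S\leq S_M\leq I$ then gives $\|\widetilde S\|\leq 1$, and $\|\widetilde S\|\geq\|S\|=1$ forces $\|\widetilde S\|=1$, completing the proof.
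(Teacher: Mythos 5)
Your proof is correct, and its first two parts --- verifying condition (iii) of Theorem \ref{T:Neumannext} with $\gamma=2$ to obtain $(I\pm S)_N$ with $0\leq (I\pm S)_N\leq 2I$, and deducing $S_m\leq\widetilde S\leq S_M$ for every norm-one self-adjoint extension from the minimality of the Krein--von Neumann extension --- coincide with the paper's argument. The genuine divergence is in the reverse inclusion. The paper handles it directly and elementarily: from $S_m\leq\widetilde S\leq S_M$ it gets $0\leq S_M-\widetilde S\leq S_M-S_m$, notes that $(S_M-S_m)f=0$ for $f\in\D$ (both operators restrict to $S$ there), and applies the Cauchy--Schwarz inequality to the positive form attached to $S_M-\widetilde S$ to get $\abs{\sip{(S_M-\widetilde S)f}{h}}^2\leq\sip{(S_M-S_m)f}{f}\sip{(S_M-S_m)h}{h}=0$, hence $\widetilde Sf=Sf$. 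You instead invoke Corollary \ref{C:Cor10} with $T=I+S$, $R=I+S_M$, and middle operator $I+\widetilde S$. That application is legitimate: for bounded positive operators the form order $\preceq$ reduces to the usual order $\leq$; $I+S$ is positively closable because it admits a bounded positive extension; $I+\widetilde S\geq 0$ follows from $\widetilde S\geq S_m\geq -I$; and the conclusion $I+S\subset I+\widetilde S$ is exactly $S\subset\widetilde S$. The paper's route is self-contained and stays entirely within the bounded setting, while yours buys a conceptual reduction to the general order-theoretic characterization of extensions (Corollary \ref{C:Cor10}, resting on Theorem \ref{T:Krein-converse}) at the cost of importing that unbounded machinery. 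Both arguments are valid, and your norm bookkeeping ($-I\leq S_m\leq\widetilde S\leq S_M\leq I$ plus $\|\widetilde S\|\geq\|S\|=1$) is sound.
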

\begin{proof}
First of all observe that the positive symmetric operators $I\pm S$ satisfy the following inequalities:
\begin{equation*}
    \|(I\pm S)f\|^2\leq 2\sip{(I\pm S)f}{f},\qquad f\in\D.
\end{equation*}
By Theorem \ref{T:Neumannext} the corresponding Krein-von Neumann extensions $(I\pm S)_N$ exist and and have norm at most $2$. Accordingly, both $S_m$ and $S_M$ in \eqref{E:SmandSM} are bounded self-adjoint extensions of $S$ having norm $1$. Now, if $\widetilde S\in\bh$ is any self-adjoint extension of $S$ having norm $1$, then $I\pm \widetilde S $ are positive extensions of $I\pm S$, respectively. Consequently, 
\begin{equation*}
    (I\pm S)_N\leq I\pm\widetilde S
\end{equation*}
by the minimality of the Krein-von Neumann extension. Thus $S_M\leq \widetilde S\leq S_m$. 

Take now any self-adjoint operator $\widetilde S\in\bh$ such that $S_m\leq \widetilde S\leq S_M$. Then $0\leq S_M-\widetilde S\leq S_M-S_m$, whence 
\begin{align*}
    \abs{\sip{(S_M-\widetilde S)f}{h}}^2\leq \sip{(S_M-S_m)f}{f}\sip{(S_M-S_m)h}{h}=0,\qquad f\in\D, h\in\hil, 
\end{align*}
because of the Cauchy-Schwarz inequality. Thus $\widetilde Sf=S_Mf=Sf$ so that   $\widetilde S$ extends $S$. Finally, we have $1=\|S\|\leq \|S_m\|\leq \|\widetilde S\|\leq \|S_M\|=1$ that completes the proof.
\end{proof}

We close the paper by reproving Krein's uniqueness formula norm preserving self-adjoint extensions. Our proof is based on Theorem \ref{T:ranS-TN} and essentially differs from the original proof of \cites{Krein12}. Another elegant argument may be found in \cite{HassiMalamud}*{Proposition 3.19}.
\begin{theorem}
The  bounded symmetric operator $S:\D\to\hil$, $\|S\|=1$ has a unique self-adjoint norm 1 extension to $\hil$ if and only if for every $g\in\D^\perp$ one has 
\begin{equation}\label{E:73}
    \sup\set{\abs{\sip{Sf}{g}}^2}{f\in\D, \|f\|^2-\|Sf\|^2\leq 1}=+\infty.
\end{equation}
\end{theorem}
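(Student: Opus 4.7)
The plan is to combine Theorem~\ref{T:SmSM} with Theorem~\ref{T:ranS-TN}. By Theorem~\ref{T:SmSM} the norm-one self-adjoint extensions of $S$ form the operator interval $[S_m, S_M]$, so uniqueness is equivalent to $S_m = S_M$, i.e., to $\ran((S_M-S_m)^{1/2}) = \{0\}$. I would apply Theorem~\ref{T:ranS-TN} to the positive symmetric operator $T = I+S$ on $\D$ with the bounded positive extension $I + S_M$; a short argument using Theorems~\ref{T:KreinNeumann} and~\ref{T:SmSM} identifies its Krein--von Neumann extension as $(I+S)_N = I + S_m$. This yields
\[
\ran((S_M - S_m)^{1/2}) = \ran((I + S_M)^{1/2}) \cap \D^\perp,
\]
so uniqueness is equivalent to the statement: no nonzero $g \in \D^\perp$ belongs to $\ran((I + S_M)^{1/2})$.

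The next step is to recast the supremum in \eqref{E:73} as a Douglas-type range condition. For $f \in \D$ and $g \in \D^\perp$ one has $\sip{Sf}{g} = \sip{(I+S)f}{g} = \sip{f}{S^*g}$, where $S^*$ denotes the adjoint of the continuous extension $\bar S\colon \overline{\D} \to \hil$, and $\|f\|^2 - \|Sf\|^2 = \sip{(I - S^*S)f}{f}$ with $I - S^*S$ a bounded positive operator on $\overline{\D}$. By the Douglas factorization theorem (exactly as used in the proof of Theorem~\ref{T:ranS-TN}), the supremum in \eqref{E:73} is finite if and only if $S^*g$ lies in the range of $(I - S^*S)^{1/2}$.

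It therefore remains to prove, for $g \in \D^\perp$, the equivalence
\[
g \in \ran((I + S_M)^{1/2}) \iff S^*g \in \ran((I - S^*S)^{1/2}),
\]
which is the technical heart of the argument. My plan is to exploit the commuting factorization $I - S_M^2 = (I-S_M)(I+S_M)$ together with the compression identity $I - S^*S = P_{\overline{\D}}(I - S_M^2)|_{\overline{\D}}$: the forward direction extracts from $g = (I + S_M)^{1/2}k$ a vector $\xi$ with $S^*g = (I - S^*S)^{1/2}\xi$ via functional calculus on $S_M$, while the converse leverages the dual form of Theorem~\ref{T:ranS-TN} applied with $T = I - S$ and extension $I - S_m$, which gives $\ran((S_M-S_m)^{1/2}) = \ran((I-S_m)^{1/2}) \cap \D^\perp$, and compares the two resulting Douglas bounds.

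I expect the main obstacle to be the spectral behaviour of $S_M$ near $\pm 1$: the operators $(I \pm S_M)^{1/2}$ need not be boundedly invertible, so the functional-calculus manipulations have to be routed through Douglas' range-inclusion theorem rather than through literal inverses. A closely related subtlety is that any $f \in \D$ with $\|Sf\| = \|f\|$ sits in the kernel of the seminorm $(\|\cdot\|^2 - \|S\cdot\|^2)^{1/2}$; decomposing such $f = f_+ + f_-$ with $S_M f_\pm = \pm f_\pm$ and combining $g \in \D^\perp$ with $g = (I + S_M)^{1/2}k$ one verifies $\sip{Sf}{g} = 0$, so the Douglas-type inequality remains well-posed on the quotient by this kernel.
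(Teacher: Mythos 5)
Your proposal is correct in outline and shares the paper's skeleton: reduce uniqueness to $S_m=S_M$ via Theorem \ref{T:SmSM}, identify $\ran(S_M-S_m)^{1/2}$ as a range intersected with $\D^\perp$ via Theorem \ref{T:ranS-TN}, and convert finiteness of the supremum in \eqref{E:73} into a Douglas--Sebesty\'en range condition. The genuine differences: the paper works with the midpoint $\widetilde S=\tfrac12(S_m+S_M)$, so that both range identities involve one operator and the single factorization $(I-\widetilde S^2)^{1/2}=(I+\widetilde S)^{1/2}(I-\widetilde S)^{1/2}$ serves both directions, and it reaches the range condition via Hahn--Banach and Riesz (producing $h_0\in\ran(I-\widetilde S^2)^{1/2}$ with $h_0-\widetilde Sg\in\D^\perp$); you work with $S_M$ directly and stay on $\overline{\D}$ via the compression $I-\bar S^{\,*}\bar S=P_{\overline{\D}}(I-S_M^2)|_{\overline{\D}}$, which avoids Hahn--Banach. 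Your forward direction works because $\sip{Sf}{g}=-\sip{(I-S_M^2)^{1/2}f}{(I-S_M)^{1/2}k}$ for $f\in\D$, $g=(I+S_M)^{1/2}k\in\D^\perp$. For the converse, ``compare the two Douglas bounds'' hides the key step, and the extension you need with $T=I-S$ is $I-S_M=(I-S)_N$ rather than $I-S_m$: Theorem \ref{T:ranS-TN} then yields $\ran(I-S_M)^{1/2}\cap\D^\perp=\{0\}$, and writing $\bar S^{\,*}g=P_{\overline{\D}}(I-S_M^2)^{1/2}\eta$ and $w\coloneqq S_Mg-(I-S_M^2)^{1/2}\eta\in\D^\perp$ gives $g-w\in\ran(I-S_M)^{1/2}\cap\D^\perp=\{0\}$, hence $2g=g+w\in\ran(I+S_M)^{1/2}\cap\D^\perp=\ran(S_M-S_m)^{1/2}$. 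With that step supplied your route closes, and is marginally more economical; the midpoint trick buys the paper a symmetric treatment of the two factors with a single auxiliary operator.
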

\begin{proof}
Before starting the proof of the claimed equivalence, let us introduce the self-adjoint operator
\begin{equation}
    \widetilde{S}\coloneqq \frac{1}{2}(S_m+S_M).
\end{equation}
It is easy to check that $S\subset \widetilde{S}$ and that $\|\widetilde{S}\|=1$. We also remark that  
\begin{equation}
    (I-\widetilde{S})-(I-S)_N=S_M-\widetilde{S}=\widetilde{S}-S_m=(I+\widetilde{S})-(I+S)_N=\frac{S_M-S_m}{2}.
\end{equation}
Hence, according to Theorem \ref{T:ranS-TN},
\begin{equation}\label{E:ranSM-Sm}
    \ran (S_M-S_m)^{1/2}=\ran(I-\widetilde{S})^{1/2}\cap\D^\perp=\ran(I+\widetilde{S})^{1/2}\cap\D^\perp,
\end{equation}
and therefore clearly
\begin{equation}\label{E:76}
     \ran (S_M-S_m)^{1/2}=\ran(I-\widetilde{S})^{1/2}\cap\ran(I+\widetilde{S})^{1/2}\cap\D^\perp.
\end{equation}
In the light of Theorem \ref{T:SmSM},  $S$ has  a unique norm-one self-adjoint extension if, and only if $S_m\neq S_M$, that is, when the set on the right hand side of \eqref{E:76} consists only of the zero vector.

Assume first that $S_m\neq S_M$ and take any nonzero vector 
\begin{equation*}
    0\neq g\in \ran(I-\widetilde{S})^{1/2}\cap\ran(I+\widetilde{S})^{1/2}\cap\D^\perp.
\end{equation*}
Choose, accordingly, $k,l\in\hil$ such that 
\begin{equation*}
    g=(I+\widetilde{S})^{1/2}k=(I-\widetilde{S})^{1/2}l.
\end{equation*}
Using the identities 
\begin{equation*}
    (I-\widetilde{S}^2)^{1/2}=(I+\widetilde{S})^{1/2}(I-\widetilde{S})^{1/2}=(I-\widetilde{S})^{1/2}(I+\widetilde{S})^{1/2}
\end{equation*}
we calculate 
\begin{align*}
    2\sip{Sf}{g}&=2\sip{f}{\widetilde{S}g}=\sip{f}{(I+\widetilde{S})g}-\sip{f}{(I-\widetilde{S})g}\\
    &=\sip{f}{(I+\widetilde{S})(I-\widetilde{S})^{1/2}l}-\sip{f}{(I-\widetilde{S})(I+\widetilde{S})^{1/2}k}\\
    &=\sip{f}{(I-\widetilde{S}^2)^{1/2}(I+\widetilde{S})^{1/2}l-(I-\widetilde{S}^2)^{1/2}(I-\widetilde{S})^{1/2}k}\\
    &=\sip{(I-\widetilde{S}^2)^{1/2}f}{(I+\widetilde{S})^{1/2}l-(I-\widetilde{S})^{1/2}k}.
\end{align*}
Thus, with $\alpha\coloneqq \|(I+\widetilde{S})^{1/2}l-(I-\widetilde{S})^{1/2}k\|^2$ we get 
\begin{align*}
    4\abs{\sip{Sf}{g}}^2\leq \gamma \cdot \|(I-\widetilde{S}^2)^{1/2}f\|^2= \gamma\cdot[\|f\|^2-\|Sf\|^2] 
\end{align*}
proving that  the supremum in \eqref{E:73} is finite.

Let us suppose now that 
\begin{equation}\label{E:7.7}
    \abs{\sip{Sf}{g}}^2\leq \alpha\cdot [\|f\|^2-\|Sf\|^2],\qquad f\in\D
\end{equation}
holds for some non-zero $g\in\D^\perp$ and $\alpha>0$ (depending only on $g$). Consider again the self-adjoint operator $\widetilde{S}\coloneqq \frac12(S_m+S_M)$ and introduce the following semi-norm 
\begin{equation*}
    p(h)\coloneqq \|h\|^2-\|\widetilde Sh\|^2,\qquad h\in\hil.
\end{equation*}
From \eqref{E:7.7} it follows that 
\begin{equation*}
    \frac{1}{\sqrt \alpha} \abs{\sip{Sf}{g}}\leq p(f)\leq \|f\|,\qquad f\in \D. 
\end{equation*}
By the Hahn-Banach theorem combined with the Riesz representation theorem, there exists a (unique) vector $h_0\in\hil$, $\|h_0\|\leq \sqrt \alpha,$ such that 
\begin{equation*}
    \sip{Sf}{g}=\sip{f}{h_0},\qquad f\in\D,
\end{equation*}
and 
\begin{equation*}
    \abs{\sip{h}{h_0}}^2\leq \alpha\cdot[\|h\|^2-\|\widetilde Sh\|^2]=\alpha \cdot \|(I-\widetilde S^2)^{1/2}\|^2,\qquad h\in\hil. 
\end{equation*}
By  \cite{Sebestyen83}*{Theorem 1} we obtain that $h_0\in\ran (I-\widetilde S^2)^{1/2}$ and hence   
\begin{equation*}
    h_0\in \ran (I-\widetilde S)^{1/2}\cap \ran (I+\widetilde S)^{1/2}.
\end{equation*}
Observe on the other hand that 
\begin{equation*}
    \sip{f}{h_0-\widetilde Sg} =\sip{Sf}{g}-\sip{Sf}{g}=0,\qquad f\in\D,
\end{equation*}
thus 
\begin{equation*}
    h_0-\widetilde Sg\in \D^\perp.
\end{equation*}
Consequently,
\begin{equation*}
    (I-\widetilde S)g+h_0=g+(h_0-\widetilde Sg)\in \D^\perp
\end{equation*}
and
\begin{equation*}
   -(I+\widetilde S)g+h_0=(h_0-\widetilde Sg)-g\in \D^\perp, 
\end{equation*}
but also
\begin{equation*}
  (I-\widetilde S)g+h_0\in \ran (I-\widetilde S)+\ran(I-\widetilde S)^{1/2}\subseteq\ran(I-\widetilde S)^{1/2} ,  
\end{equation*}
and similarly, $-(I+\widetilde S)g+h_0\in \ran(I+\widetilde S)^{1/2}$. Summing up, we have
\begin{equation}
  \begin{cases}
   (I-\widetilde S)g+h_0\in  \ran(I-\widetilde S)^{1/2}\cap\D^\perp, \\ -(I+\widetilde S)g+h_0\in  \ran(I+\widetilde S)^{1/2}\cap\D^\perp.
  \end{cases}
 \end{equation}
 Using identities \eqref{E:ranSM-Sm} it follows that
 \begin{equation*}
     0\neq 2g=(I-\widetilde S)g+h_0-[-(I+\widetilde S)g+h_0]\in\ran (S_M-S_m)^{1/2},
 \end{equation*}
 which apparently implies $S_M\neq S_m$.
\end{proof}

\end{document}